\documentclass[a4,12pt]{amsart}

\usepackage[T2A]{fontenc}
\usepackage[utf8]{inputenc}
\usepackage[english]{babel}

\usepackage{hyperref}
\usepackage{amsmath}
\usepackage{amssymb}
\usepackage{amscd}
\usepackage{amsfonts,amsmath,amsthm}

\usepackage[matrix,arrow,curve]{xy}
\usepackage{xfrac}
\usepackage{color}

\DeclareMathOperator{\sq}{\mathbb{D}}

\newtheorem{theorem}{Theorem}
\newtheorem{prop}{Proposition}
\newtheorem*{cor}{Corollary}
\newtheorem{lemma}{Lemma}
\newtheorem*{main lemma}{The main lemma}

\newtheorem{definition}{Definition}

\newcommand{\gm}{{\Small GM}}

\title{Critical point Theory for the Distance Function on PL-Spaces}

\author{ Dirk Siersma}

\address{
 D. Siersma: Mathematisch Instituut, Universiteit Utrecht  d.siersma@uu.nl }
\keywords{PL Morse theory, bifurcations, polyhedra,\ \   MSC  52B70, 57R70, 57Q70}
\date{\today}

\begin{document}

\begin{abstract}
We  study the (typical) singularities of the distance function from a point in the ambient space to a polytopal complex in Euclidean space. By means of the Goresky-MacPherson splitting we generalize the existing linear theory to the quadratic distance function.
Critical points occur on `selected' strata of the PL-space, where normal Morse Data determine the change of topology.
On the homology level there exist  indices with multiplicities, combined in a Morse vector. We focus mainly on PL-manifolds. We give bounds, depending on the dimension of the strata. In low dimensions we describe singularity types. At the end we discuss relations with discrete Morse Theory.

\end{abstract}

\maketitle

\section{Introduction}\label{SectIntro}

Critical point theory and especially Morse Theory  form a bridge between geometric topology  and analysis.
Marston Morse started this theory in the first half of the 19th century.
A corner stone is his book: \textit{Critical Point Theory in Global analysis and Differentiable Topology (1969)}(\cite{MC}).
The theory has been developed in the finite dimensional and in the infinite dimensional case. While first formulated for smooth manifolds and differential functions, the approach was extended to singular spaces and non-differentiable functions.

 In this article we study (embedded) polyhedral spaces  $W  \subset \mathbb{R}^n$, with a differentiable function on the ambient space, especially   the  squared distance function  $\sq_u$ from a punt $u \in \mathbb{R}^N$.  In the $C^2$-manifold  case it is well known that    critical points of $\sq_u$ correspond to normals to $M$ emanating from $u$ and via versa. In the PL-manifold  case this is also true, with appropriate definitions.
This is related  to the concept of  Euclidean distance degree, mostly studied in the context of algebraic sets
\cite{DHOST}. A PL-approach seems to be of interest.

In the case of a singular (=non-smooth) space the study \cite{GM}  of Goresky-Macpherson (\gm) applies. A first issue is to define the concept of critical point in relation to a stratification of the (singular) space. A GM-critical point is defined as a critical point restricted to a stratum. They describe  at  GM-critical points the local type by Morse-data, which represent the behavior of the function on the link of a singular point. 

Around 1970  Banchoff \cite{Banch}, \cite{Ba3}, and  Kuiper \cite{Ku} studied linear functions on on PL-manifolds and the relation with  curvature. More recently there is the work of Grunet, Kühnel and Rote  \cite{GKR}.  This treats  critical points of PL-functions on PL-spaces (especially PL-manifolds). They deal with the standard  statements of Morse Theory, such as regular interval theorem, cell-attaching when passing a critical level and generalized Morse inequalities. An interesting observation is, that in the PL-manifold case it is generically possible that several cells attach around a single critical point, also cells from different dimension. 

The same statements occur in the GM-approach, which considers more general spaces. We combine both approaches to study the quadratic distance function on a PL-space. Important is that near a
 GM-critical point the function is locally equivalent to a linear function, determined by the derivative.\\
The local Morse Data is a product of Normal Morse Data and Traversal Morse Data. The restriction of the squared distance to a normal slice is therefore a linear function  on a PL-space.  We can use the results from \cite{GKR}  and can combine them to the local Morse Data.  

 Critical points occur on `selected' strata of the PL-space, where normal Morse Data determine the change of topology.
On the homology level there exist  indices with multiplicities, combined in a Morse vector. We give bounds, depending on the dimension of the strata. In low dimensions we describe the  singularity types.

We also mention that the Morse-inequalities are valid in the GM-approach. This was not explicitly  written, but known to the experts.

We restrict from the second part of Section \ref{s:smt} to PL-manifolds. In the case of low dimensions we have a precise picture of  the attaching around the critical level. At the homology level we encode this into a Morse vector.

We remark that not every \gm-critical point is a topological critical point. Sometimes (very often) GM-critical points are topologically regular, i.e. the function is locally topological equivalent to a linear function. Moreover trivial  homotopy and homology does not imply topological regularity (examples and discussion are in \cite{GKR}).

The cases of polytopes in low dimension (including PL-curves)  have been studied in connection  to count the (maximum) number of normals from a point to the PL-space (i.e. the critical points of $\sq_u$. In section \ref{s:count} we summarize the results and give the connection the  Concurrent Normal Conjecture for convex bodies.

A natural question is the relation to Discrete Morse Theory. For linear functions on a PL-manifold   and in low dimensions this has been confirmed in   \cite{FLV}. In Section \ref{s:dmt} we show that also a correspondence for the quadratic distance function should exist and we sketch two approaches to confirm this.

The work on critical points of the distance function and the connections with normals started at Research in Pairs at CIRM Luminy 2024 and were continued during the Winter Math Weeks 2025 and
2026 in Batumi (Georgia). We thank the hosts for the good working atmosphere and moreover  George Khimshiashvili and Gaiane Panina for discussion and useful remarks.

%%%%%%%%%%%%%%%
\section{PL-spaces and PL-functions}\label{s:PL}
%%%%%%%%%%%%%%%
\subsection{Polytopal complexes, PL manifolds and PL functions}\label{ss:Pl-function}

We will recall first the definition of (embedded)  polytopal complex and PL-manifold $M$. 

\begin{definition} A subset of the Euclidean space $\mathbb{R}^N$ is called
a \textbf{polyhedral set}, if it is the intersection of finitely many closed half-spaces of $\mathbb{R}^N$.
Bounded polyhedral sets are called convex  polytopes.
\end{definition}
A polyhedral set has a natural stratification into linear cells, also called faces, which are convex, non-void interior in its affine span and have a well defined dimension.

\begin{definition}
A \textbf{polytopal complex}, or \textbf{PL-space} is a finite family
\textbf{convex polytopes}  in some   $\mathbb{R}^N$ such that the following closure properties hold:
\begin{itemize} 
\item Each face of a family member is again a family member.
\item The intersection of two family members is a face of both members, possibly
the empty set, and thus a family member by the first property.
\end{itemize}
The family members are called the cells of the complex. A \textbf{simplicial
complex} is a polytopal complex whose cells are simplices.
\end{definition}
The star $st(\sigma)$ of a cell $\sigma$ consists of those cells, whose closure contains $\sigma$. The link $lk (\sigma)$ are the cells in  $\overline{st(\sigma)}\setminus st(\sigma) $.
\

\begin{definition}
A \textbf{ PL-manifold}  $M$ is a finite polytopal complex $K$, build up from convex $d$-polytopes such that $|K|$ is homeomorphic to $M$ and such that the star of each (relatively open) cell is piecewise linear homeomorphic to an open ball in $d$-space.
\end{definition}

\noindent
We will assume that all our polyhedral complexes are embedded in $\mathbb{R}^N$.

\subsection{Facts about PL-functions}
We next define  (recall)  definitions and statements about PL-functions.
Our main source is \cite{GKR}, which is written  for abstract PL-manifolds. We will apply it to linear functions for an embedded PL-space.

\begin{definition}
A point $p \in M$ is called a (topologically) regular point (short: topo-regular) of a function $f$ if $f$ is at $p$ locally topologically equivalent to a non-vanishing linear function. The point $p$ is singular if its is not regular.
\end{definition}

\begin{definition}[3.1 in \cite{GKR}(generic PL-function)]
Let P be a finite polytopal complex P. A PL-function $f : P  \to \mathbf{R}$ is called generic PL if $f(v) \ne f(w)$ for any two distinct vertices $v,w$ of $P$.
\end{definition}

\begin{lemma}[3.2 in\cite{GKR}]
The PL-general interval theorem holds  if $f^{-1}[a,b]$ contains no vertex of $P$.
\end{lemma}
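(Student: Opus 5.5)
The statement to prove is: if $f\colon P\to\mathbb{R}$ is PL (linear on each cell) and $f^{-1}[a,b]$ contains no vertex of $P$, then $f^{-1}[a,b]$ is PL-homeomorphic to $f^{-1}(a)\times[a,b]$. Moreover, $f^{-1}(-\infty,b]$ deformation retracts onto $f^{-1}(-\infty,a]$.

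I would prove this cell by cell, building a product structure that is compatible with the faces. Write $P_{[a,b]}=f^{-1}[a,b]$. For every cell $\sigma$ of $P$, the set $\sigma_{[a,b]}=\sigma\cap f^{-1}[a,b]$ is a convex polytope, because $f|_\sigma$ is affine. Since $\sigma_{[a,b]}$ contains no vertex of $\sigma$, every vertex of $\sigma_{[a,b]}$ lies on $f^{-1}(a)$ or on $f^{-1}(b)$. Such a vertex is the intersection of an edge of $\sigma$ with a level hyperplane. It follows that $f$ is nonconstant on every cell meeting $P_{[a,b]}$, except possibly on cells lying entirely inside a level set. Those cells cannot occur: a cell inside a level set would contain a vertex in $[a,b]$.

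The key step is a canonical straight-line flow. For each edge $e$ of $P$ crossing the slab, the two level sets meet $e$ in points $e_a$ and $e_b$, and I would join them by the segment $[e_a,e_b]$. Each cell $\sigma_{[a,b]}$ is then the convex hull of its bottom face $\sigma_a=\sigma\cap f^{-1}(a)$ and its top face $\sigma_b$. The two faces are combinatorially equivalent, since both are slices of $\sigma$ cutting the same set of edges with no vertex in between; their face lattices agree, indexed by the faces of $\sigma$ crossed by the slab. The real content is that $\sigma_{[a,b]}$ is a \emph{prismatoid} whose top and bottom correspond vertex by vertex.

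To get the homeomorphism, I would triangulate compatibly. Choose a triangulation of $P$ without new vertices in the open slab, for instance by pulling vertices in a fixed order. Since $f$ is generic, order them by $f$-value. Each simplex meeting the slab then has vertices split into a set $A$ below $a$ and a set $B$ above $b$. Its slice is $\Delta_{|A|-1}\times\Delta_{|B|-1}\times[a,b]$, given by the parametrisation
\[
(\lambda,\mu,t)\mapsto \text{the point on the segment from } \textstyle\sum\lambda_i v_i \text{ to } \sum \mu_j w_j \text{ at level } t .
\]
This parametrisation is linear on each level, it is canonical, and it is compatible with faces, since restricting to a face just drops coordinates. The maps therefore glue across $P$ to a homeomorphism $f^{-1}(a)\times[a,b]\to P_{[a,b]}$ that respects $f$. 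It is piecewise linear after subdividing, or at least piecewise projective, and one can straighten it to PL by using the join structure directly. The deformation retraction comes from pushing the level $t$ down to $a$ along these segments.

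The main obstacle I expect is showing that the gluing is consistent and genuinely PL. The product parametrisation is bilinear in $(\lambda,\mu)$ and $t$, not linear. I would handle this by observing that for a fixed ratio the level-$t$ slice is an affine image of the level-$a$ slice, so the map is PL on a suitable subdivision into prisms. Alternatively, I would use the standard fact that a join of a simplex slab is PL-isomorphic to a product; this is exactly the route taken in \cite{GKR}.
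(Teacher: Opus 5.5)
The paper does not prove this lemma; it is quoted from \cite{GKR}, so your argument has to stand on its own. Its topological part does. After a triangulation of $P$ with no new vertices, each simplex meeting the slab is a join $\Delta_A*\Delta_B$ with $f<a$ on $A$ and $f>b$ on $B$. Each of its points $z$ with $a\le f(z)\le b$ lies on a unique segment $[x,y]$ with $x\in\Delta_A$, $y\in\Delta_B$, and these segments are compatible with passing to faces. You should spell out global injectivity. It holds because the open segment through $z$ lies in the open carrier simplex of $z$. The glued map $f^{-1}(a)\times[a,b]\to f^{-1}[a,b]$ is then a continuous bijection of compact spaces, hence a level-preserving homeomorphism. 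Sliding down the segments retracts $f^{-1}(-\infty,b]$ onto $f^{-1}(-\infty,a]$. This is all the paper uses (no change of topology between vertex levels). Genericity of $f$ is not needed anywhere.

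What is not proved is the PL statement you announced, and the justifications you offer do not work. On a simplex, the level-$t$ point of $[x,y]$ is $x+\frac{t-f(x)}{f(y)-f(x)}(y-x)$. The coefficient depends on $x$ and $y$, so the parametrisation is rational in $(\lambda,\mu)$. Unless $f$ is constant on $A$ and on $B$, it is neither linear on each level nor bilinear. Nor are the level slices in general affine images of one another. For a tetrahedron with two vertices below $a$ and two above $b$ the slices are quadrilaterals, and the ratios in which their diagonals cut each other vary with $t$. ``Piecewise projective'' and ``straighten it using the join structure'' are not arguments, and deferring to \cite{GKR} is circular here.

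A correct PL upgrade is purely combinatorial. Your homeomorphism carries the cells $\tau_{[a,b]},\tau_a,\tau_b$ onto $\tau_a\times[a,b]$, $\tau_a\times\{a\}$, $\tau_a\times\{b\}$, so these two convex-cell complexes have isomorphic face posets. Take derived subdivisions of both. Choose the interior point of each $\tau_{[a,b]}$ at level $\tfrac{a+b}{2}$, and that of $\tau_a\times[a,b]$ at height $\tfrac{a+b}{2}$. The induced simplicial isomorphism is a PL homeomorphism. It preserves levels because $f$ and the $[a,b]$-coordinate are affine on each simplex and agree at the vertices.
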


For a change in topology it remains to look at the vertices:

\begin{definition}[3.3 in \cite{GKR}] \label{d:hcrit} (H-critical point). Let $f$ be generic PL and $v$ a vertex with level $f(v) = a.$ Then $v$ is called H-critical if $H_{*}(f_a,f_a \setminus {v}) \ne O$.
Coefficients are taken in a field. 
The rank of $H_{k}(f_a,f_a \setminus {v})$ is called the multiplicity of index $k$ at $v$. The sum of all  multiplicities is called the total multiplicity. If the total multiplicity is greater than or equal to $1$ we say $v$ is an H-critical point.

\end{definition}

\begin{theorem}[3.5 in \cite{GKR}](Morse relations, duality)
For  a generic PL-function on a compact {\bf PL-d-manifold} we have  Morse inequalities and duality, where we include multiplicities and multiple indices.
\end{theorem}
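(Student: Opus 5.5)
We prove the Morse relations by the usual filtration argument, using the PL-general interval theorem (Lemma 3.2 of \cite{GKR}) to see that topology changes only at vertices. Duality then follows by replacing $f$ with $-f$ and applying Lefschetz (Poincaré) duality for the link of each vertex.

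\emph{Filtration.} Let $f$ be generic PL on the compact PL-$d$-manifold $M$. Order the vertices $v_1,\dots,v_m$ so that $f(v_1)<\dots<f(v_m)$, which is possible since the values are distinct. Choose regular values $c_0<f(v_1)<c_1<\dots<c_{m-1}<f(v_m)<c_m$. By the PL-general interval theorem, $f_{c_i}=f^{-1}(-\infty,c_i]$ deformation retracts onto $f_{f(v_i)}$. Likewise $f_{c_{i-1}}$ is homotopy equivalent to $f_{a}\setminus\{v_i\}$ with $a=f(v_i)$: the open star of $v_i$ contains no other vertex, so the sub-level sets can be pushed down along straight lines inside each cell. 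Excision then identifies
\[
H_*(f_{c_i},f_{c_{i-1}})\cong H_*(f_a,f_a\setminus\{v_i\})\cong \tilde H_{*-1}(\mathrm{lk}^-(v_i)),
\]
where $\mathrm{lk}^-(v_i)$ is the lower link, i.e.\ the part of $\mathrm{lk}(v_i)$ on which $f<a$. The second isomorphism uses the cone structure of the star. Hence the multiplicity $\mu_k(v_i)$ of Definition~\ref{d:hcrit} equals $\dim H_k(f_{c_i},f_{c_{i-1}})$, and we set $\mu_k=\sum_i\mu_k(v_i)$.

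\emph{Inequalities.} With field coefficients, the long exact sequences of the triples $(f_{c_i},f_{c_{i-1}},f_{c_0})$, with $f_{c_0}=\emptyset$, together with subadditivity of the alternating rank functions along exact sequences, give the strong Morse inequalities
\[
\sum_{j\le k}(-1)^{k-j}\mu_j\ \ge\ \sum_{j\le k}(-1)^{k-j}b_j(M)
\]
for every $k$, and equality $\sum_k(-1)^k\mu_k=\chi(M)$ in top degree. In particular $\mu_k\ge b_k$. This is routine and uses nothing beyond the identification above, which holds for any compact polytopal complex. The manifold hypothesis enters only in the duality.

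\emph{Duality.} This is the step I expect to be the main obstacle. The function $-f$ is also generic PL, with the same critical vertices. At $v$, the lower link of $-f$ is the upper link $\mathrm{lk}^+(v)$. Since $M$ is a PL-manifold, $\mathrm{lk}(v)$ is a PL $(d-1)$-sphere $S$. Moreover $\mathrm{lk}^-(v)$ and $\mathrm{lk}^+(v)$ are, up to homotopy, complementary subcomplexes of $S$. To see this, split $S$ along the level set $\{f=a\}$, which meets $S$ in a subcomplex disjoint from the vertices by genericity. Each part then deformation retracts onto the complement of the other, working cell by cell on the convex pieces. Alexander duality in $S^{d-1}$ gives
\[
\tilde H_{k-1}(\mathrm{lk}^-(v))\cong \tilde H^{d-k-1}(\mathrm{lk}^+(v)),
\]
hence $\mu_k(v;f)=\mu_{d-k}(v;-f)$. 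The care needed is in justifying the complementarity, namely that $\mathrm{lk}^+(v)\simeq S\setminus \mathrm{lk}^-(v)$. I would handle it by subdividing $S$ along the hyperplane section, which yields a CW pair. Combining this duality with the inequalities for $-f$ gives the dual Morse inequalities, consistent with Poincaré duality $b_k=b_{d-k}$ (over $\mathbb{Z}_2$, or over any field when $M$ is orientable).
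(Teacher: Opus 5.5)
Your proposal is correct and follows essentially the route the paper uses for its own versions of these statements. The theorem itself is only cited from \cite{GKR}, but the paper gets Morse inequalities from the sublevel filtration plus the link formula $H_*(f_a,f_a\setminus\{v\})\cong\tilde H_{*-1}(\mathrm{lk}^-v)$, and duality from Alexander duality for $L^+$ and $L^-$ in the $(d-1)$-sphere link.

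Two small points need tightening. First, the step $f_{c_i}\simeq f_{f(v_i)}$ uses the half-open form of the interval theorem (no vertex in $(f(v_i),c_i]$), which your straight-line push gives. Second, in the duality step the closed set $S\cap\{f\ge a\}$ cannot deformation retract onto the open set $\mathrm{lk}^+(v)$; the correct statement is that both retract onto the full subcomplex spanned by the link vertices with $f>a$, and that is all Alexander duality needs.
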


%%%%%%%%%%%%%%%%%%%%%%%%%%%%%%
\section{Stratified Morse theory}\label{s:smt}
\subsection{Introduction}
Goresky and MacPherson \cite{GM} developed a critical point theory  for a smooth function $f: W \to \mathbb{R}$ on a stratified space $W$. See also  the short version \cite{Go} of Goresky.  
We will apply this below to our situation:  
$$\mathbb{D}_u (x) = | u -x |^2 \; \; ; \; x\in W \; ; \; u \in \mathbb{R}^N$$
  on a polytopal complex or a PL-manifold. 
Stratified Morse Theory is a very general; it applies to Whitney stratified spaces. In the PL-case the theory simplifies; many assumptions and constructions in \cite{GM} are satisfied or clear.  
The concept of critical point of a function $f$ is defined  in \cite{Go} as a critical point of $f$ restricted to a stratum. We will call this a\textit{ \gm -critical point}.  It is more general than what we will consider as critical point later). As an example every vertex is a   \gm -critical point.

{\bf Notation.} With $W_{\le c}$ we mean the set of all points of $W$, where the value of $f$ is $\le c$. Other notations: $W_{[ab]} = f^{-1}[a,b]$  and $W_b = f^{-1}(b)$.

\begin{theorem} [5.5.1 in \cite{Go}] Suppose the interval $[a,b]$ contains no  \gm -critical values of $f$. Then $W_{\le a}$ is homeomorphic to $W_{\le b}$, preserving the stratification.
\end{theorem}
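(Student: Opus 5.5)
The plan is the usual gradient-like flow argument, adapted to the stratification: build a \emph{controlled} vector field on $W_{[a,b]}$ that is tangent to every stratum and along which $f$ strictly decreases, and flow $W_{\le b}$ down onto $W_{\le a}$. In the PL situation the strata are the relatively open cells $\sigma$ of the polytopal complex. These are flat pieces of affine subspaces, and the Whitney conditions and control data hold automatically: near a cell, the complex is locally a product of the cell with a cone over its link. We apply this to a smooth $f$ on the ambient space, in particular $f=\sq_u$.

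First I use the hypothesis on each stratum. Since $[a,b]$ contains no \gm-critical value, every point $x\in W_{[a,b]}$ lying in a cell $\sigma$ satisfies $d(f|_\sigma)(x)\neq 0$. Concretely, the orthogonal projection $\pi_\sigma\nabla f(x)$ of the ambient gradient onto the linear span of $\sigma$ is nonzero. In particular no vertex lies in $W_{[a,b]}$, because the restriction of $f$ to a vertex always has a critical point there. The set $W_{[a,b]}$ is compact, and the closure of each cell is compact, so $|\pi_\sigma\nabla f|$ is bounded below on each closed cell intersected with $W_{[a,b]}$. There is one subtlety: a point in the closure of $\sigma$ lies in a face $\tau$, and there we need $\pi_\sigma\nabla f\neq 0$ as well. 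This follows from $\pi_\tau\nabla f\ne0$, because the span of $\tau$ is contained in the span of $\sigma$. Hence there are uniform constants $\epsilon>0$ with $|\pi_\sigma\nabla f|\ge\epsilon$ on $\overline{\sigma}\cap W_{[a,b]}$ for every cell $\sigma$.

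Next I construct the vector field $V$ by induction on the dimension of the cells, using the local conical structure. On the lowest-dimensional cells meeting $W_{[a,b]}$, set $V=-\pi_\tau\nabla f$. On a neighbourhood $\tau\times c(\mathrm{lk}\,\tau)$ of a cell $\tau$, extend $V$ so that at a point $(y,t\theta)$ it equals the vector $V(y)$ translated parallel to $\tau$; this vector is tangent to each higher cell $\sigma\supset\tau$, since $\sigma$ contains directions parallel to $\tau$. Then glue this extension to $-\pi_\sigma\nabla f$ on the interior of $\sigma$ using a partition of unity that depends only on the cone coordinate $t$. The key inequality $df(V)<0$ survives the gluing because it is a convex combination of two vectors, $-\pi_\tau\nabla f$ and $-\pi_\sigma\nabla f$. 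Both vectors lie in the span of $\sigma$, and both have negative inner product with $\nabla f$: the first gives $-|\pi_\tau\nabla f|^2<0$, and the second gives $-|\pi_\sigma\nabla f|^2<0$. Finally, normalise so that $df(V)=-1$.

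The last step is to flow. The field $V$ is piecewise smooth and tangent to strata. I expect that in the PL setting it can be chosen so that it preserves the product neighbourhoods; if that fails, I would replace it by a Lipschitz, stratum-preserving field. Its flow is then defined on $W_{[a,b]}$ for time $b-a$, preserves each cell, and lowers $f$ at unit speed. Flowing carries $W_{\le b}$ homeomorphically onto $W_{\le a}$: on $W_{\le a}$ itself we use a time function that is cut off (the standard collar trick), and on $W_{[a,b]}$ we use time $f(x)-a$. The main obstacle I expect is making this flow continuous across strata, i.e.\ checking the controlled-vector-field condition. In the PL case I would handle it by making $V$ on each conical neighbourhood exactly the product field $V_\tau\times 0$ near $\tau$, so that continuity becomes immediate. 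Alternatively, the whole statement is Theorem 5.5.1 of \cite{Go} specialised to polyhedra, so the cleanest route is simply to verify that a polytopal complex carries a Whitney stratification by its open cells and then cite that theorem.
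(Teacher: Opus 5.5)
Your outline is correct, but it follows a different route from the paper. The paper gives no argument for this statement: it quotes Goresky's Theorem 5.5.1, which in Goresky--MacPherson follows from Thom's first isotopy lemma, i.e.\ from integrating controlled vector fields on a Whitney stratified space. It takes for granted that the PL setting meets those hypotheses, which is exactly the ``cleanest route'' of your last sentence.

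Building the field by hand instead makes the tacit hypotheses explicit and shows why flat strata make them trivial:
your inequality $|\pi_\sigma\nabla f|\ge|\pi_\tau\nabla f|$ for $\tau\le\sigma$ is the flat form of Whitney~(a);
Whitney~(b) holds because secants between points of $\bar\sigma$ lie in the direction space of $\sigma$;
properness is compactness of $W$;
and the same inequality shows the GM-critical set is closed, so $[a,b]$ can be enlarged to $[a-\eta,b]$, which your collar trick needs.

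For a general smooth $f$, two steps need more care than you give them.
First, in the convexity step the transported vector $V_\tau(\pi_\tau x)$ is paired with $\nabla f(x)$, not with $\nabla f(\pi_\tau x)$. So negativity needs a uniform bound $df(V_\tau)\le -c<0$ on $\tau$ (which your $\epsilon$ provides) together with thin enough neighbourhoods. The parallel-translation product $\tau\times c(\mathrm{lk}\,\tau)$ exists only over compact parts of the open cell in any case, so the radii must shrink with dimension; this is consistent because $\pi_\rho\circ\pi_\tau=\pi_\rho$ for $\rho\le\tau$.
Second, after normalising, the factor $-1/df_x(V_\tau(\pi_\tau x))$ depends on $x$ and not only on $\pi_\tau x$. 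The flow is then no longer a product flow, so continuity is not immediate from the product structure.

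Your fallback repairs this. $V$ is Lipschitz on $W$ and tangent to every cell, so after a Lipschitz extension to $\mathbb{R}^N$, uniqueness of trajectories gives, by induction on dimension, invariance of every closed cell and hence of every open cell.

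For the paper's function $f=\mathbb{D}_u$, both issues disappear. Indeed $\nabla\mathbb{D}_u(x)-\nabla\mathbb{D}_u(\pi_\tau x)=2(x-\pi_\tau x)$ is orthogonal to $\tau$, so derivatives of $\mathbb{D}_u$ along $\tau$ are constant on the fibres of $\pi_\tau$. The normalised field is then exactly the product field near each cell, as you wanted.
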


\begin{definition}[2.3 in \cite{GM}]
A \gm-critical point $p \in W$ of the function $f: W \to \mathbf{R}$ is Morse  provided:
\begin{itemize}
\item its restriction to $W$ is proper
\item the critical point is isolated,
\item the restriction of $f$ to the  each stratum has a Morse  ($=$ non-degenerate) critical point at $p$,
\item  at each critical point on stratum $X$ the cotangent vector $df(p)$ is non-degenerate for $W$.
\end{itemize}
\end{definition}

A cotangent vector $\xi$ is conormal at $p$ if $\xi (T_pS) = 0$ if $S$ is the stratum containing $p$.  In the PL-case the definition of degenerate covector $\xi$ reduces to: There exist a face $S^{\star}$, adjacent to $p$ such that $\xi (Span (S^{\star})) = 0$.

\begin{definition}
The local Morse Data for $f$ at a \gm-critical point $p$ with value $f(p)=c$ is defined by the (topological) pair $(W_{[c-\epsilon,c +\delta]}, W_{c-\epsilon}) \cap  B_{\delta} $  where    $B_{\delta}$ is a small enough ball around $p$.
\end{definition}

\begin{theorem}[3.2. in \cite{GM}]\label{t:attaching}
If $c$ is the only  \gm -critical value between $c - \epsilon$ and $c + \epsilon$ then $W_{\le c +\epsilon}$ is homotopy equivalent to
$W_{\le c - \epsilon}$ plus attaching via the local Morse Data: $W_{[c-\epsilon,c +\epsilon]} \cap  B_{\delta}$ along the space  $W_{c-\epsilon} \cap  B_{\delta} $ .
\end{theorem}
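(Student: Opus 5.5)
The plan is to prove Theorem~\ref{t:attaching} in two stages. First we reduce it to the regular interval theorem (Theorem 5.5.1 of \cite{Go}) applied away from the critical point. Then we do a local analysis near $p$ that exhibits $W_{\le c+\epsilon}$ as $W_{\le c-\epsilon}$ with the local Morse data glued on. We may assume there is a single \gm-critical point $p$ at level $c$; several critical points at the same level are isolated and are handled independently in disjoint balls. Since $W$ is a finite polytopal complex and $f=\sq_u$ is a smooth (quadratic) function on the ambient space, we choose $\delta$ small enough that $B_\delta$ meets only the open cells of $st(p)$. We also choose $\delta$ so that the sphere $\partial B_\delta$ is transverse to every stratum meeting it; in the PL case this holds for all small $\delta$ by conic structure. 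Finally we take $\epsilon\ll\delta$.

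The first key step is to show that $f$ has no \gm-critical points on $W_{[c-\epsilon,c+\epsilon]}\setminus \mathrm{int}B_\delta$. This also holds for $f$ restricted to the strata of $W\cap\partial B_\delta$ inside this slab, because $f$ restricted to a stratum $S$ near $p$ has $p$ as its only critical point, and by the Morse condition $df(p)$ is nondegenerate. The point is that at points of $S\cap\partial B_\delta$ the gradients of $f|_S$ and of $|x-p|^2|_S$ are not negatively proportional once $\epsilon$ is small relative to $\delta$. With this in hand, the stratified isotopy lemma (the controlled-vector-field argument behind Theorem 5.5.1 of \cite{Go}) gives a stratum-preserving flow. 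On $W\setminus B_\delta$ it pushes $W_{\le c+\epsilon}$ down to $W_{\le c-\epsilon}$, and it is tangent to $\partial B_\delta$ there. Concretely, we lift $-\nabla f/|\nabla f|^2$ to a controlled vector field on $W_{[c-\epsilon,c+\epsilon]}\setminus \mathrm{int}B_\delta$ that is tangent to $\partial B_\delta$. In the PL setting this can be done cell by cell, since each cell is a convex polytope and $f$ is a convex quadratic on it.

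Integrating this field yields a deformation retraction of $W_{\le c+\epsilon}$ onto
$$W_{\le c-\epsilon}\cup \bigl(W_{[c-\epsilon,c+\epsilon]}\cap B_\delta\bigr)\cup \bigl(\text{a collar in } W_{[c-\epsilon,c+\epsilon]}\cap\partial B_\delta\bigr).$$
The collar in $\partial B_\delta$ is itself retracted, by the same flow restricted to the sphere, onto $W_{c-\epsilon}\cap \partial B_\delta\subset W_{\le c-\epsilon}$. This gives the homotopy equivalence of $W_{\le c+\epsilon}$ with $W_{\le c-\epsilon}\cup_{W_{c-\epsilon}\cap B_\delta}(W_{[c-\epsilon,c+\epsilon]}\cap B_\delta)$, which is the claimed attachment of the local Morse data along $W_{c-\epsilon}\cap B_\delta$. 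The definition of local Morse data uses $c+\delta$ as the upper level; since there are no other critical values nearby, the pairs for any small upper level are homeomorphic by the same flow inside $B_\delta$, so the choice does not matter.

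The main obstacle is the first key step, namely constructing the vector field so that it is simultaneously gradient-like for $f$, stratum-preserving, and tangent to $\partial B_\delta$ near the corner where $\partial B_\delta$ meets the slab. This is exactly where nondegeneracy of $df(p)$ enters. I would first try to exploit the PL structure: near $p$, $W$ is a cone $p*lk(p)$, and $f$ is $C^2$-close to the linear function $df(p)$ on it. One can then replace $B_\delta$ by the linear neighborhood $p*\rho\, lk(p)$, which is a union of polytopes on which everything is explicit. The required tangency reduces to the statement that no edge direction in $st(p)$ is annihilated by $df(p)$, which is precisely the PL nondegeneracy condition stated after the definition of Morse critical point.
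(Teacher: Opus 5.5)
The paper gives no argument for this statement; it only quotes Goresky--MacPherson. Your architecture is exactly theirs: small ball $B_\delta$, a slab with $\epsilon\ll\delta$, a stratified flow outside the ball tangent to $\partial B_\delta$, then absorbing the collar.

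The gap is in the step you single out as the main obstacle. You need $f=\sq_u$ to have no critical points on the strata $S\cap\partial B_\delta$ inside $W_{[c-\epsilon,c+\epsilon]}$. You assert this but do not establish it, and the route you propose fails.

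First, ``not negatively proportional'' is the wrong condition. Put $r=|x-p|^2$. Excluding $d(f|_S)=\lambda\,d(r|_S)$ only for $\lambda\le 0$ gives a decreasing field pointing weakly \emph{into} the ball, which is useless for a flow on $W\setminus\mathrm{int}B_\delta$. Tangency needs $d(f|_{S\cap\partial B_\delta})\neq 0$, i.e.\ no proportionality of either sign.

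Second, $\sq_u$ is not $C^2$-close to its linear part. Exactly, $\sq_u(x)=c+\xi(x-p)+|x-p|^2$ with $\xi=d\sq_u(p)$, and your reduction to $\xi$ on $p*\rho\,lk(p)$ breaks down in two ways.
\begin{itemize}
\item If $p$ lies in the interior of a stratum $X$ with $\dim X>0$, then $\xi$ annihilates $T_pX$ by the definition of a critical point on $X$. So ``no edge direction in $st(p)$ is annihilated'' is false, and the points of $\partial(p*\rho\,lk(p))$ in the directions of $X$ lie on level $c$ of the linear function.
\item Even at a vertex of a non-simplicial star, your condition is not the paper's nondegeneracy. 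Take a square cell $Q$ with corner $p$ and $\xi$ vanishing on its diagonal. Then $\xi$ is nonzero on both edges at $p$ and on $\mathrm{span}\,Q$, yet the opposite corner $w$ is a vertex of $lk(p)$ with $\xi(w-p)=0$. So the corner $p+\rho(w-p)$ of your neighbourhood, a $0$-dimensional stratum once $\partial N$ is stratified by its cells, lies on level $c$ of the linear function.
\end{itemize}
In both situations it is exactly the discarded term $|x-p|^2$ that saves the argument.

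The repair is a direct computation with the round ball. For a cell $S$ with $p\in\overline S$ we have $x-p\in\mathrm{lin}(S)$ for all $x\in S$. So $S\cap\partial B_\delta$ is an open piece of a round sphere in $\mathrm{aff}(S)$ centred at $p$, which also gives transversality. On it, $\sq_u=c+\delta^2+\langle g_S,x-p\rangle$, where $g_S$ is the orthogonal projection of $2(p-u)$ onto $\mathrm{lin}(S)$. Hence the critical values of $\sq_u$ on the strata of $W\cap\partial B_\delta$ lie in $\{c+\delta^2\}\cup\{c+\delta^2\pm\delta|g_S|\}$. Let $m=\min\{|g_S|:g_S\neq 0\}$, and take $\delta<m$ and $\epsilon<\min(\delta^2,\delta(m-\delta))$. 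Then none of these values lies in $[c-\epsilon,c+\epsilon]$. Contrary to what you expected, this step does not use the nondegeneracy of $\xi$.

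Finally, your ``deformation retraction'' is not one as written. The flow tangent to $\partial B_\delta$ moves $W\cap\partial B_\delta\cap W_{[c-\epsilon,c+\epsilon]}$, which belongs to $A=W_{[c-\epsilon,c+\epsilon]}\cap B_\delta$. Instead, write $C=W_{[c-\epsilon,c+\epsilon]}\setminus\mathrm{int}B_\delta\cong C_0\times[c-\epsilon,c+\epsilon]$ with $C_0=C\cap W_{c-\epsilon}$. Under this product, $C\cap\partial B_\delta$ corresponds to $(C_0\cap\partial B_\delta)\times[c-\epsilon,c+\epsilon]$. Since $(C_0,C_0\cap\partial B_\delta)$ is a compact semialgebraic, hence triangulable, pair, $C$ strong deformation retracts onto $C_0\cup(C\cap\partial B_\delta)$. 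Extending by the identity gives a deformation retraction of $W_{\le c+\epsilon}$ onto $W_{\le c-\epsilon}\cup A$, and $W_{\le c-\epsilon}\cap A=W_{c-\epsilon}\cap B_\delta$, as required.
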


So the change in topology is determined by the local Morse Data. These data can be computed by a splitting in tangential and normal direction via
the main theorem of \cite{GM}:

\begin{theorem}[3.5.4 in \cite{GM}]\label{t:data-product}
For a fixed stratification and a fixed function $f$ with Morse 
 \gm -critical points $p\in W$ there is a  homeomorphism:
$$ \mbox{\rm Local Morse Data} \cong \mbox{\rm Tangential Morse Data} \times \mbox{\rm Normal Morse Data} $$
\end{theorem}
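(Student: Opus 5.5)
The statement is the Goresky--MacPherson product theorem, so the plan is to re-derive it in the PL setting where the constructions become explicit, rather than to invoke it. Let $p$ be a Morse \gm-critical point of $f$ on the stratum $S$ of dimension $s$, with $f(p)=c$.

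\emph{Local product structure.} Since $W$ is a polytopal complex, a small ball $B_\delta$ around $p$ meets $W$ in a set PL-homeomorphic to
$$(S\cap B_\delta)\times \mathrm{cone}(L),$$
where $L=lk(S)$ is the link. Concretely, choose an affine normal slice $N_p$ through $p$, orthogonal to $\mathrm{Span}(S)$, and set $N=W\cap N_p\cap B_\delta$. Then $N$ is a cone over $L$ with vertex $p$, and $W\cap B_\delta\cong (S\cap B_\delta)\times N$ via the orthogonal splitting $\mathbb{R}^N=\mathrm{Span}(S)\oplus N_p$. Because every face adjacent to $S$ contains $S$, this splitting holds exactly, not just up to isotopy. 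It is the PL substitute for Thom's first isotopy lemma.

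\emph{Splitting $f$.} Here $f$ is smooth on the ambient space and $f|_S$ has a nondegenerate critical point at $p$. By the Morse lemma on $S$, after a change of coordinates on $S$ we may write $f|_S=c-|x_-|^2+|x_+|^2$. On the normal slice, nondegeneracy of $df(p)$ means that $df(p)$ vanishes on the span of no adjacent face. Hence $f|_N$ is, to first order, the restriction of a linear function that is nonconstant on every ray of $\mathrm{cone}(L)$, and so has no critical points on $N\setminus\{p\}$ near $p$.

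\emph{The obstacle.} I expect the main difficulty to be the mixed terms. In these coordinates
$$f(x,y)=f|_S(x)+\ell(y)+O(|x||y|+|y|^2),$$
where $\ell=df(p)|_{N_p}$. We need a stratum-preserving homeomorphism of the local Morse data pair onto the product pair
$$\bigl(D^{\lambda}\times D^{s-\lambda},\ \partial D^{\lambda}\times D^{s-\lambda}\bigr)\times\bigl(N_{[c-\epsilon,c+\epsilon]},\,N_{c-\epsilon}\bigr),$$
with $\lambda$ the Morse index of $f|_S$ at $p$. The first attempt is a one-parameter deformation $f_t=f|_S(x)+\ell(y)+t\,R(x,y)$, $t\in[0,1]$, where $R$ is the remainder. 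The aim is to show that for $\delta\ll\epsilon$ there are no \gm-critical points of $f_t$ near the boundary of the box, using the nondegeneracy of $\ell$ on every face and the fact that $|R|$ is small compared with $|\ell|$ on the link. Theorem 5.5.1 of \cite{Go}, applied to the family with a controlled vector field built facewise (in the PL case, by piecewise linear vector fields tangent to the cells), then gives that the local Morse data of $f$ and of $f_0$ are homeomorphic.

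\emph{The model function.} For $f_0$ the product decomposition is the standard one: the local Morse data of a sum of functions on a product is the product of the two local Morse data. On $S$ the data is the smooth Morse pair given above, which is the tangential Morse data. On $N$ it is $(N_{[c-\epsilon,c+\epsilon]},N_{c-\epsilon})$, the normal Morse data. This final step is an elementary manipulation of product pairs.
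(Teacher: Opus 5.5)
Your outline has the right shape: straighten $W$ near $p$ to a product cone, deform $f$ to $f_0=f|_S(x)+\ell(y)$, then split. The paper gives no proof of its own and simply quotes Goresky--MacPherson. Even so, two steps fail as written.

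\emph{First, the deformation.} Nondegeneracy of $df(p)$ only says that $\ell$ does not vanish identically on the span of any cell of the normal slice. It does not make $\ell$ nonzero on every ray of $\mathrm{cone}(L)$. At any non-extremal vertex of a triangulated surface, for instance, $\ell$ vanishes on rays through open $2$-cells. That zero set on $L$ is essentially where $L^{+}$ and $L^{-}$ are glued, so ``$|R|$ small compared with $|\ell|$ on the link'' is false exactly where you need it.

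The constants are also in the wrong order. Local Morse data require $\epsilon\ll\delta$. If $\delta\ll\epsilon$ and $df(p)\neq 0$, then $|f-c|\le C\delta<\epsilon$ on the whole neighbourhood, and the pair degenerates to $(W\cap B_\delta,\emptyset)$.

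What does work is a stratumwise derivative estimate:
\begin{itemize}
\item Since $df_t=df(p)+O(\delta)$ on the box uniformly in $t$, on a cell $\tau\supsetneq S$ you get $d(f_t|_\tau)=df(p)|_{\mathrm{Span}\,\tau}+O(\delta)\neq 0$. Together with $f_t|_S=f|_S$, this makes $p$ the only GM-critical point of each $f_t$.
\item On boundary strata with $|y|=\delta$ (corners included), a critical point forces $|\ell(y)|\ge a\delta-O(\delta^2)$, with $a>0$ given by nondegeneracy.
\item On strata with $|x|=\delta$, $|y|<\delta$, critical points lie only on $\{y=0\}$, at levels $c\pm\delta^2$.
\end{itemize}
Hence for $\epsilon\ll\delta^2$ all $f_t$ are regular at the levels $c\pm\epsilon$ on every stratum of the box. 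That is what makes $(z,t)\mapsto t$ a stratified submersion on $\{f_t\in[c-\epsilon,c+\epsilon]\}$, so that the isotopy lemma identifies the pairs at $t=0$ and $t=1$. Theorem 5.5.1 of the short survey, for a single function, is not what does this.

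\emph{Second, and more seriously, the last step.} It is not an elementary manipulation of product pairs; it is the actual content of the theorem. Put $g=f|_S-c$. The set $\{|g(x)+\ell(y)|\le\epsilon\}$ in the box is not $\{|g|\le\epsilon\}\times\{|\ell|\le\epsilon\}$. You must construct a homeomorphism of pairs onto $(A_1\times A_2,\,A_1\times B_2\cup B_1\times A_2)$ that straightens the corners.

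Already for $-x^2+y^2$ on a square this is not immediate. You have to identify the X-shaped region between the hyperbolas $y^2-x^2=\pm\epsilon$, with the two arcs $x^2-y^2=\epsilon$ distinguished, with a square and two opposite sides. For the handle on $S$ times a polyhedral cone carrying a linear function, you need a genuine corner-straightening (moving-the-wall) argument, and it is missing.

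The same problem sits in your first step. Local Morse data are defined with a round ball, and $W\cap B_\delta$ is a round slice of the product cone, not $(S\cap B_\delta)\times N$. Passing from the ball to your box needs a well-definedness result of the type of Proposition 3.6.2 of GM. Such a result is proved by moving the boundary of the neighbourhood without creating critical points at the levels $c\pm\epsilon$.
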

The local Normal Morse Data is defined as  the local Morse Data of $f$ restricted to a normal hyperplane $N_p$ at $p$ and the Tangential Morse Data is the local Morse Data of $f$, restricted to the stratum, which contains $p$. \\ 
We will use notations:
$N_{\delta}(p) = B_{\delta}(p) \cap N_p$ and $\partial N_{\delta}(p) = \partial B_{\delta}(p) \cap N_p$.
 \\ NB. The product 
$(A_1,B_1) \times (A_2,B_2) =(A_1 \times A_2 , A_1 \times B_2 \cup B_1 \times A_2)$.
As a corollary the attaching in Theorem \ref{t:attaching} space is 
$$(D^{\lambda},\partial D^{\lambda}) \times D^{s - \lambda} \times (N_{\delta}(p) \cap W_{[v-\epsilon,v +\epsilon]},(N_{\delta}(p)\cap W_{v-\epsilon}). $$
where $ s$ is dimension stratum and
where $\lambda$ is the index of the Morse critical point of $f$ restricted to the stratum.
\smallskip

\subsection{Normal Morse Data depends only on the co-vector}\; \\
This follows from:\\
{\bf Proposition 3.6.2 (\cite{GM}}) 
{\em The (stratified) homeomorphism type of tangential and normal Morse Data are well defined (i.e. independent of the choices, which are made in their construction).}\\

The important corollary is that the normal Morse Data depend only on $df(p) $.  This means that we can replace locally the function $f$ by its linear part. See also Theorem 5.5.2 from \cite{Go}. In the PL-context, this means that not only the normal slice is PL (again a polytopal complex), but also the function on it can supposed to be linear.

%%%%%%%%%%%%%%%%%
\subsection{Homotopy description and half link}

Following \cite{GM} we define the upper half link as the pair of spaces
$$( L^{+},\partial L^{+})= (N_{\delta}(p)\cap W_{c+\epsilon},   \partial N_{\delta}(p)\cap W_{c+\epsilon}) $$
and the lower half link
$$(L^{-},\partial   L^{- } )= (N_{\delta}(p)\cap W_{c-\epsilon},   \partial N_{\delta}(p)\cap W_{c- \epsilon}) $$
Note that $\partial N_{\delta}(p)\cap W_{c+\epsilon}$ and $\partial N_{\delta}(p)\cap W_{c-\epsilon})$  are naturally  homeomorphic and (by gluing along  $\partial L^{-} \approx \partial L^{+}:= \partial L^{\pm}$) the (abstract) union  $L^{+} \cup_{\partial L^{\pm}} L^{-}$ makes sense and is homeomorphic to the link of $p$.
We recall from \cite{GM}:  
\begin{theorem}(3.11.1) The normal Morse Data is homeomorphic to the pair of spaces ($\mbox{\rm cone}(L^{+} \cup_{\partial L} L^{-}), L^{-})$.
\end{theorem}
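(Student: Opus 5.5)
We reduce the statement to the PL situation, where the normal slice is a cone and the function can be taken linear, and then build an explicit homeomorphism of pairs. By Proposition 3.6.2 of \cite{GM} and the remark following it, the normal Morse Data depend only on the covector $\xi=df(p)$. We may therefore replace $f$ on $N_\delta(p)\cap W$ by the linear function $\ell(x)=c+\xi(x-p)$. For $\delta$ small, $N_\delta(p)\cap W$ is PL-homeomorphic to the cone over the link $L=\partial N_\delta(p)\cap W$, with cone point $p$. Since $\xi$ is nondegenerate, $\xi$ vanishes on no face adjacent to $p$. Hence $\ell-c$ restricted to the link $L$ has no zero-dimensional degenerate pieces, and the zero set $L_0=L\cap\{\ell=c\}$ is a bicollared subpolyhedron of $L$ separating it into $L_{>c}$ and $L_{<c}$.

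\textbf{Step 1 (identify the half links).} Take $\epsilon\ll\delta$. Radial projection from $p$ shows that the level $N_\delta(p)\cap W_{c+\epsilon}$ is homeomorphic to $L_{\ge c+\epsilon'}$ for a small $\epsilon'$, hence to the closure of $L_{>c}$. This homeomorphism carries the boundary $\partial L^{+}$ to a copy of $L_0$. Concretely, each ray $tx$ with $x\in L$, $\xi(x-p)>0$, meets the hyperplane $\ell=c+\epsilon$ exactly once when $\xi(x-p)\ge \epsilon/\delta$; the remaining rays exit through $\partial N_\delta$, and these give the collar. The same argument applies to $L^{-}$. Gluing along $\partial L^{\pm}\cong L_0$ therefore recovers $L$, which is consistent with the preceding remark.

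\textbf{Step 2 (the normal Morse Data as a cone pair).} By definition, the normal Morse Data is $(N_\delta\cap W_{[c-\epsilon,c+\epsilon]},\,N_\delta\cap W_{c-\epsilon})$. The first space is the region of the cone $\mathrm{cone}(L)$ lying between two parallel hyperplanes close to the apex. We construct a PL isotopy along the rays of the cone that flattens this slab onto the whole cone $\mathrm{cone}(L^{+}\cup_{\partial L}L^{-})$:
\begin{itemize}
\item the part above $c+\epsilon$ is pushed into the slab;
\item the bottom face $W_{c-\epsilon}\cap N_\delta$ is kept fixed, and via Step 1 it is identified with $L^{-}$ sitting in the base of the cone.
\end{itemize}
Equivalently, the thickened slab $W_{[c-\epsilon,c+\epsilon]}\cap N_\delta$ deformation-retracts, fixing its bottom face, onto a neighbourhood of the apex that is itself a cone on its frontier. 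That frontier is $L^{+}\cup L^{-}$ together with the collar $\partial N_\delta\cap W_{[c-\epsilon,c+\epsilon]}\cong\partial L^{\pm}\times I$, which collapses to $\partial L^{\pm}$. Compactness and the finiteness of the cell structure let us choose all radii uniformly.

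\textbf{Main obstacle.} The delicate point is showing that the slab, rather than just the homotopy type, is homeomorphic to the cone as a pair. This means absorbing the side collar $\partial L^{\pm}\times[c-\epsilon,c+\epsilon]$ into the cone without disturbing $L^{-}$. I would handle it with a PL homeomorphism $\mathrm{cone}(L)\cap\{|\ell-c|\le\epsilon\}\to \mathrm{cone}(L^+\cup_{\partial}L^-)$, defined ray by ray on each cell of the cone and using the pseudo-radial function $\max(|\ell-c|/\epsilon,\ r/\delta)$. Its level sets are all homeomorphic to $L^{+}\cup L^{-}\cup(\partial L^{\pm}\times I)\cong L$, which matches the frontier described in Step 2. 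The genericity condition, that $\xi$ is nonzero on every adjacent face, is exactly what makes this function have no critical behaviour away from the apex. This gives the homeomorphism claimed in (3.11.1).
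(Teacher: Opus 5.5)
Your proof is correct in the setting where the paper uses the theorem, but it takes a different route from the paper. The paper gives no argument of its own: it quotes 3.11.1 from Goresky--MacPherson, where it holds for arbitrary Morse functions on Whitney stratified spaces, and the cone and product structures there come from stratified isotopy arguments rather than from any literal cone.

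You instead exploit two PL facts. First, $N_\delta(p)\cap W$ is exactly the straight cone on $L$. Second, once $f$ is replaced by $\ell$, the gauge $\rho=\max(|\ell-c|/\epsilon,\,|x-p|/\delta)$ is positive and positively homogeneous of degree one about $p$. Hence the slab is exactly $\{\rho\le 1\}$ and each ray from $p$ meets $\{\rho=1\}$ once. So $(y,t)\mapsto p+t(y-p)$ is a homeomorphism from the cone on the frontier $L^{+}\cup(\text{side band})\cup L^{-}$ onto the slab, with $L^{-}$ sitting in the base.

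This buys a short explicit homeomorphism with no stratified machinery. The price has two parts.
\begin{itemize}
\item It needs exact conicity and linearity. For the quadratic $\mathbb{D}_u$ on the slice, the slab is in general not star-shaped about $p$.
\item The reduction step uses slightly more than Proposition 3.6.2 literally gives. That proposition transports the pair $(J,K)$, whereas the statement also involves $L^{+}$ and the gluing $\partial L^{+}\cong\partial L^{-}$. You need these to be unchanged when $f$ is replaced by $\ell$; the paper tacitly assumes this as well.
\end{itemize}

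Two corrections to the write-up. First, the isotopy and deformation-retraction language of Step 2 only yields the homotopy statement (Corollary 3.11.2). The homeomorphism comes entirely from the gauge, so lead with it.

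Second, it is homogeneity, not genericity, that makes $\rho$ work. Nondegeneracy of $\xi$ enters elsewhere. On each cell of $L$, the critical values of $\xi$ are $\pm\delta$ times the norm of $\xi$ on the span of the corresponding cone cell, and nondegeneracy makes these nonzero. So for small $\epsilon$ the side band $\{|x-p|=\delta,\ |\ell-c|\le\epsilon\}$ is a product $\partial L^{+}\times[-\epsilon,\epsilon]$. That product structure is what identifies $\partial L^{+}$ with $\partial L^{-}$. It also lets you absorb the band into a collar of $\partial L^{+}$ in $L^{+}$ while keeping $L^{-}$ fixed.

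Without nondegeneracy the statement can fail outright. For a V-shaped $2$-dimensional cone on which $\xi\ge 0$ vanishes only along the middle edge, $\partial L^{+}$ is two points while $\partial L^{-}$ is empty.
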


\begin{cor}(3.11.2)
The normal Morse Data has the homotopy type to the pair ($\mbox{\rm cone}(L^{-}) , L^{-})$.
\end{cor}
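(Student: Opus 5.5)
The corollary follows from Theorem 3.11.1 by a deformation argument. Write $L = L^{+}\cup_{\partial L^{\pm}} L^{-}$ for the link of $p$ in the normal slice, so the normal Morse Data is the pair $(\mathrm{cone}(L), L^{-})$. The key observation is that $\mathrm{cone}(L)$ is contractible, and so is $\mathrm{cone}(L^{-})$. I would therefore show that the inclusion of pairs
$$(\mathrm{cone}(L^{-}), L^{-}) \hookrightarrow (\mathrm{cone}(L), L^{-})$$
is a homotopy equivalence of pairs.

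The steps are as follows.

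\begin{enumerate}
\item Both inclusions $L^{-}\hookrightarrow \mathrm{cone}(L^{-})$ and $L^{-}\hookrightarrow \mathrm{cone}(L)$ are cofibrations. In the PL setting everything is a polyhedron and these are subcomplex inclusions, so this is immediate.
\item The map $\mathrm{cone}(L^{-})\to\mathrm{cone}(L)$ is a map between contractible spaces, hence a homotopy equivalence.
\item It restricts to the identity on $L^{-}$.
\end{enumerate}

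A standard lemma now applies: a map of cofibration pairs $(X,A)\to(Y,A)$ which is the identity on $A$ and a homotopy equivalence $X\to Y$ is a homotopy equivalence rel $A$ (tom Dieck, or Brown's gluing/cofibration lemma). This yields a homotopy equivalence of pairs, and the corollary follows.

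For a more hands-on route that stays in the PL world, I would instead construct an explicit deformation retraction of $\mathrm{cone}(L)$ onto $\mathrm{cone}(L^{-})\cup L^{-}=\mathrm{cone}(L^{-})$, fixing $L^{-}$ throughout. Concretely, $L^{+}$ deformation retracts within $\mathrm{cone}(L)$ through the cone lines onto $\partial L^{+}\cup(\text{cone part})$. Then $\mathrm{cone}(L^{+})$, glued along $\mathrm{cone}(\partial L^{\pm})$, collapses onto $\mathrm{cone}(\partial L^{\pm})\subset \mathrm{cone}(L^{-})$. This works because $(\mathrm{cone}(L^{+}),\mathrm{cone}(\partial L^{+}))$ is the cone on a pair, and the cone on $L^{+}$ retracts onto the cone on the subspace union the subspace itself. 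In fact $\mathrm{cone}(Y)$ deformation retracts onto $\mathrm{cone}(B)\cup Y$ only when $Y$ retracts suitably, so this route needs care. The cleaner statement is that $\mathrm{cone}(Y)$ deformation retracts onto $\mathrm{cone}(B)$ whenever $B\subset Y$ is a cofibration, via a retraction of $Y\times I$ onto $Y\times\{0\}\cup B\times I$.

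The main obstacle is exactly this collapse step: making sure the homotopy fixes $L^{-}$ pointwise, including its boundary $\partial L^{\pm}$, where the two halves are glued. I would first try the abstract cofibration lemma, since it sidesteps explicit formulas. Only if a homeomorphism-level PL statement is wanted would I use regular neighbourhood/collapsing arguments: $\mathrm{cone}(L)$ collapses to $\mathrm{cone}(L^{-})$ by first collapsing the top cone $\mathrm{cone}(L^{+})$ from its free face $L^{+}\setminus\partial L^{+}$.
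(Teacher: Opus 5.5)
Your proposal is correct and follows the route the paper intends. The paper gives no separate proof and simply quotes GM 3.11.2 as a consequence of Theorem 3.11.1. Your argument supplies the standard deduction: $\mathrm{cone}(L^{-})\hookrightarrow\mathrm{cone}(L)$ is a subpolyhedron inclusion of contractible spaces. It is therefore a strong deformation retraction fixing $L^{-}$ pointwise (equivalently, the PL collapse $\mathrm{cone}(L)\searrow\mathrm{cone}(L^{-})$), and this is a homotopy equivalence of pairs.

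You should state the convention $\mathrm{cone}(\emptyset)=\{\text{pt}\}$, which the corollary itself needs. It covers the minimum case $L^{-}=\emptyset$, where your step ``$\mathrm{cone}(L^{-})$ is contractible'' would otherwise fail.
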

Therefore the homotopy type depends only on $L^{-}$.

%%%%%%%%%%%%%%%%%%
\subsection{Homological local Morse Data}
Let $p$ be a \gm -critical point for $f$ with value $f(p)=c$. We define the homological local  Morse Data by
$$HMD_{\star}(p) :=H_{\star}(W_{c+\epsilon} \cap  B_{\delta}, W_{c-\epsilon} \cap  B_{\delta}) $$
 with coefficients in a field.
 
 \begin{lemma} \label{l:link-formula}
 For the homological Morse Data holds:
 $$HMD_{\star}(p) :=H_{\star}(W_{c}  \cap  B_{\delta}, W_{c} \cap  B_{\delta}-\{p\}) = \tilde{H}_{*-1}(L^{-}).$$
 %%%tilde{H}_{*-1}(lk^{-}v)
 \end{lemma}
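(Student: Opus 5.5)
Our plan is to pass through the local Morse Data. By Theorem~\ref{t:data-product}, the local Morse Data at $p$ is the product of the Tangential and Normal Morse Data. By the corollary in the previous subsection, the attaching pair is
$$(D^{\lambda},\partial D^{\lambda}) \times D^{s-\lambda}\times (\mbox{\rm cone}(L^{+}\cup_{\partial L^{\pm}} L^{-}), L^{-}),$$
and up to homotopy the last factor is $(\mbox{\rm cone}(L^{-}),L^{-})$ by Corollary 3.11.2. Here $\lambda$ is the tangential index.

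For the PL situation we study first, the critical points of interest are those where the restriction of $f=\sq_u$ to the stratum has its minimum, so $\lambda=0$. (More generally we take $\lambda=0$ when $p$ is a vertex, which is the case modelled on Definition~\ref{d:hcrit}.) Otherwise the tangential factor contributes only a degree shift, and we would state the formula with that shift.

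The key steps are as follows.

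\textbf{Step 1: excision to the ball.} By excision in the ball $B_\delta$ and the attaching Theorem~\ref{t:attaching}, $HMD_*(p)=H_*(W_{c+\epsilon}\cap B_\delta, W_{c-\epsilon}\cap B_\delta)$ is the homology of the local Morse Data pair. Here we read $W_{c\pm\epsilon}$ as the sublevel sets $W_{\le c\pm\epsilon}$.

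\textbf{Step 2: rewriting as $H_*(W_c\cap B_\delta, W_c\cap B_\delta-\{p\})$.} There are no other \gm-critical values in $[c-\epsilon,c+\epsilon]$, and $f$ is locally linear by Proposition 3.6.2. Theorem 5.5.1 then gives two deformation retractions: $W_{\le c+\epsilon}\cap B_\delta$ retracts onto $W_{\le c}\cap B_\delta$, and $(W_{\le c}\cap B_\delta)\setminus\{p\}$ retracts onto $W_{\le c-\epsilon}\cap B_\delta$. Both are obtained by flowing along a controlled vector field of the stratification, which away from $p$ has no critical points in the ball. This is the same identification used in \cite{GKR} for $H_*(f_a,f_a\setminus v)$.

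\textbf{Step 3: the link formula.} With $\lambda=0$ the product with $D^{s}$ is homotopically trivial, so the pair has the homotopy type of $(\mbox{\rm cone}(L^{-}),L^{-})$. The long exact sequence of this pair, together with contractibility of the cone, gives
$$H_*(\mbox{\rm cone}(L^-),L^-)\cong \tilde H_{*-1}(L^-).$$
If $\lambda>0$, the Künneth formula for pairs instead yields $\tilde H_{*-1-\lambda}(L^-)$. We would note this shift, since the statement is implicitly taken at a tangential minimum.

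\textbf{Main obstacle.} The main difficulty is Step 2: justifying that the ball-localised sublevel sets deform as claimed near $p$, and in particular that removing $p$ from $W_{\le c}\cap B_\delta$ retracts to the lower level. The first thing we would try is the PL structure. Near $p$ the space $W_{\le c}\cap B_\delta$ is a cone with apex $p$ over $W_{\le c}\cap\partial B_\delta$, because $f$ is linear up to stratified homeomorphism. Removing the apex therefore gives $(W_{\le c}\cap\partial B_\delta)\times(0,\delta]$, which is homotopy equivalent to the half-link. Combined with the tangential/normal splitting, this half-link is identified with $L^-$ up to the contractible tangential factor. This conical argument also gives Steps 2 and 3 at once: the cone is contractible, so $H_*(\mbox{cone},\mbox{cone}\setminus p)\cong\tilde H_{*-1}(\mbox{base})$.
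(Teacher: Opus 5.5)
Steps~1 and~3 are sound. Your caveat about the tangential index is also correct: in general one gets $\tilde H_{*-1-\lambda}(L^-)$, and the lemma as stated needs $\lambda=0$, which holds for $\mathbb{D}_u$ because its critical points on strata are minima. So $H_*(W_{\le c+\epsilon}\cap B_\delta,W_{\le c-\epsilon}\cap B_\delta)\cong\tilde H_{*-1}(L^-)$ is established.

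The gap is in Step~2, the identification with $H_*(W_{\le c}\cap B_\delta,W_{\le c}\cap B_\delta\setminus\{p\})$. This is exactly the content of the paper's one-line proof (``by deformation retraction'', citing Kuiper). The retractions you assert are true, but neither of your justifications works.

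\begin{itemize}
\item Theorem 5.5.1 cannot be invoked. Both $[c-\epsilon,c]$ and $[c,c+\epsilon]$ contain the critical value $c$, and what the flow does at $p$ is precisely what must be controlled.
\item Your conical fallback is false as soon as $s=\dim S\ge 1$. You use that $f|_S$ has a nondegenerate minimum at $p$, so $W_{\le c}\cap S\cap B_\delta=\{p\}$. No stratified homeomorphism can make $f$ linear near $p$, since a linear function on $S$ has no strict local minimum.
\end{itemize}

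Proposition 3.6.2 linearizes $f$ only on the normal slice, and only up to homeomorphism of the Morse Data pair, not of the germ $(W_{\le c},W_{\le c}\setminus\{p\})$. Hence $W_{\le c}\cap B_\delta$ is not a cone over $W_{\le c}\cap\partial B_\delta$. Nothing in your argument supports the phrase ``up to the contractible tangential factor''.

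A flow-free fix uses the convexity of the sublevel sets of $\mathbb{D}_u$.

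\begin{itemize}
\item For small $\delta$ one has $W\cap B_\delta=(p+T_pW)\cap B_\delta$, where $T_pW$ is the tangent cone. Also $\{\mathbb{D}_u\le c\}$ is a round ball containing $p$. So $W_{\le c}\cap B_\delta$ is star-shaped about $p$, hence contractible.
\item Consequently $H_*(W_{\le c}\cap B_\delta,W_{\le c}\cap B_\delta\setminus\{p\})\cong\tilde H_{*-1}(W_{\le c}\cap B_\delta\setminus\{p\})$.
\item For a unit vector $v\in T_pW$ one has $\mathbb{D}_u(p+tv)=c-2t\langle u-p,v\rangle+t^2$. So the punctured set is the union of the segments $\{p+tv:0<t\le\min(\delta,2\langle u-p,v\rangle)\}$ over the open set $U=\{v:\langle u-p,v\rangle>0\}$ of unit tangent directions. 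It is therefore homotopy equivalent to $U$.
\item Since $u-p\perp T_pS$, and $T_pW=T_pS\times C_N$ with $C_N=T_pW\cap(T_pS)^{\perp}$, shrinking the $T_pS$-component deformation retracts $U$ onto $\{b\in C_N:|b|=1,\ \langle u-p,b\rangle>0\}$.
\item This set is the open lower half of the normal link, which is $\simeq L^-$.
\end{itemize}

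Thus both expressions in the lemma equal $\tilde H_{*-1}(L^-)$.
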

 \begin{proof}
 By deformation reaction, see e.g. \cite{Ku}. 

  \end{proof}

In a similar way the homological transveral local Morse dat $HTMD_{\star}(p)$ and the local homological normal Morse  data $HNMD_{\star}(p)$ are defined.
Theorem \ref{t:data-product}  implies that  that the homological Morse Data is the tensor product of the corresponding homological tangential and normal Morse Data:
$$ HMD_{\star}(p) = HTMD_{\star}(p) \otimes HNMD_{\star}(p)$$

\begin{definition}
A \gm-critical point is called a \textbf{homological regular} point if $HMD_{\star}(p) = 0$. It is \textbf{homologicaly singular} if $HMD_{\star}(p) \ne  0$.\\
The total rank of $HMD_{\star}(p) $  is called the multiplicity of $p$.

We say that p is \textbf{H-critical of index k}, if  $rk HMD_k(p) \ne 0$ and this rank is referred to as the corresponding
\textbf{multiplicity} of $p$  restricted to the index k.  Compare \cite{GKR} and Definition \ref{d:hcrit}.
\end{definition}

%%%%%%%%%%%%%%%%%%%
\subsection{Morse Inequalities in stratified Morse Theory}  Kuiper  \cite{Ku} generalized in 1970  Morse inequalities to the general context of  compact metric spaces  and  continuous functions with isolated singularities. He had in mind (topological or differential) manifolds with our without boundary and simplicial complexes. Since this publication is not easy accessible\footnote{A copy can be find on on my website:
\url{https://webspace.science.uu.nl/~siers101/Kuiper-1970.pdf}}.
we will use 
Nicoalescu's  \cite{Ni}  approach with  a filtration $W_1 \subset W_2 \subset  \cdots \subset W_m = W $ of sublevel sets,
 to obtain
Morse inequalities in 
our context. The main difference with standard Morse  theory is, that we now count the \gm-critical points with multiplicities and in all dimensions where the homological Morse Data is non-vanishing. 

We introduce some notation. \\
Let 
$M_p(t) = \sum_{p,k}\mu_{p,k}(f) t^k$ where $\mu_{p,k}(f) = rk HMD_k(p)$,\\
$P_f(t) = \sum_p M_p(t) = \sum_k \mu_k(f )t^k$ where $\mu_k(f) = \sum_p \mu_{p,k}(f)$,\\
$P_W(t) = \sum_k b_k(W)$ and
$P_{W_i,W_{i-1} }(t)= \sum_k  b_k(W_i,W_{i-1})t^k $.

\begin{prop}
Morse Inequalities for \gm-critical points
$$P_f(t)  = P_W(t) + (1+t) Q( t)$$
where $Q(t)$ is a polynomial with non-negative coefficients
\end{prop}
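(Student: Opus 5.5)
We will follow the filtration approach of Nicolaescu \cite{Ni} and reduce the inequality to a purely algebraic statement about long exact sequences. Let $c_1 < c_2 < \dots < c_m$ be the \gm-critical values of $f$. After a small perturbation we may assume each level contains exactly one \gm-critical point $p_i$. Since $W$ is compact and $f$ is proper, there are finitely many of them. Choose regular values $a_0 < c_1 < a_1 < c_2 < \dots < c_m < a_m$ with $a_0 < \min f$ and $a_m > \max f$. Set $W_i = W_{\le a_i}$, so that $\emptyset = W_0 \subset W_1 \subset \dots \subset W_m = W$.

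The first step is to identify the relative homology of consecutive pieces with the homological local Morse Data. By Theorem 5.5.1 of \cite{Go}, $W_{i-1}$ is a stratified deformation retract of $W_{\le c_i-\epsilon}$, and similarly $W_i$ retracts onto $W_{\le c_i+\epsilon}$. By Theorem \ref{t:attaching}, $W_{\le c_i+\epsilon}$ is obtained up to homotopy from $W_{\le c_i-\epsilon}$ by attaching the local Morse Data. Excision, justified by the fact that the PL pairs are triangulable and hence good pairs, then gives
$$H_*(W_i,W_{i-1}) \cong H_*(W_{[c_i-\epsilon,c_i+\epsilon]}\cap B_\delta,\; W_{c_i-\epsilon}\cap B_\delta) = HMD_*(p_i).$$
Hence $P_{W_i,W_{i-1}}(t) = M_{p_i}(t)$. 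If several critical points share a level and we do not perturb, the same argument applied to disjoint balls gives a direct sum, and $P_{W_i,W_{i-1}}(t)$ is the sum of their $M_p(t)$. Summing over $i$ yields $P_f(t) = \sum_i P_{W_i,W_{i-1}}(t)$.

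The second step is the standard subadditivity argument for long exact sequences over a field. For a triple $A\subset B\subset C$, exactness of
$$\cdots \to H_k(B,A)\to H_k(C,A)\to H_k(C,B)\to H_{k-1}(B,A)\to\cdots$$
together with a rank count gives
$$P_{B,A}(t) + P_{C,B}(t) = P_{C,A}(t) + (1+t)R(t),$$
where $R(t)$ has non-negative coefficients. Concretely, $R$ is the generating function $\sum_k \operatorname{rk}(\partial_{k+1})\,t^k$ of the ranks of the connecting maps $\partial_{k+1}\colon H_{k+1}(C,B)\to H_k(B,A)$. We then induct along the filtration with $A=\emptyset$, $B=W_{i-1}$, $C=W_i$. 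This gives
$$\sum_{i\le j} P_{W_i,W_{i-1}}(t) = P_{W_j}(t) + (1+t)Q_j(t),$$
where $Q_j$ has non-negative coefficients. Taking $j=m$ yields the proposition with $Q = Q_m$.

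The main obstacle is the first step. One must make sure that the attaching statement, which is only a homotopy equivalence in Theorem \ref{t:attaching}, together with excision of the complement of $B_\delta$, really identifies $H_*(W_i,W_{i-1})$ with $HMD_*(p_i)$ as defined above. In the PL setting we will handle this by choosing $\delta$ so small that $B_\delta\cap W$ is conelike and applying the five lemma to the map of pairs. Finally, non-Morse \gm-critical points such as degenerate vertices do not obstruct the argument. Only the isolatedness of the critical points and the local-data identification are used, not the Morse condition itself.
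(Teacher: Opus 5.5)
Your proposal is correct and follows the paper's own route: a sublevel filtration with one critical level per step, identification of $H_*(W_i,W_{i-1})$ with the homological local Morse Data (summed over the level) via the attaching theorem and excision, and then Nicolaescu's long-exact-sequence argument, which you write out explicitly with $R(t)=\sum_k \operatorname{rk}(\partial_{k+1})t^k$ where the paper only cites it. One small correction: finiteness of the set of GM-critical points follows from compactness together with isolatedness, not from compactness and properness alone; for $\sq_u$ this is automatic, because its restriction to each open cell is strictly convex and so has at most one critical point there.
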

\begin{proof}
We follow a standard proof of the Morse-inequalities, e.g. \cite{Ni}. We take a filtration with sub level sets containing only one critical point in 
the interior of $W_i \setminus W_{i-1}$. The homological Morse Data is equal to the sum off the local homological Morse Data.
Then  $P_{W_i,W_{i-1} }(p)(t)= \sum_k  b_k(HMD)(p) t ^k$.  
Next  $P_f(t) = \sum_i P_{W_i,W_{i-1}}(t)$ and continue the proof as in \cite{Ni}.

\end{proof}

\begin{cor} Weak Morse inequalities
$$ \mu_k(f)  \ge b_k(W)$$
Strong Morse inequalities;
$$ \mu_k(f) - \mu_{k-1}(f) \pm \cdots (-1)^k \mu_0(f)  \ge  b_k(W) - b_{k-1}(W) \pm \cdots (-1)^k b_0(W)   $$
Euler-Poincaré Formula:
$$\chi(W) = \sum (-1)^k \mu_k(f) $$
\end{cor}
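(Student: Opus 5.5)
The corollary follows from the Proposition by comparing coefficients in the polynomial identity $P_f(t) = P_W(t) + (1+t)Q(t)$, where $Q(t)=\sum_k q_k t^k$ has $q_k \ge 0$. Since $W$ is compact and $f$ has finitely many \gm-critical points, each carrying finite-rank homological Morse Data, all quantities involved are finite. Comparing the coefficients of $t^k$ gives, for every $k$,
$$\mu_k(f) = b_k(W) + q_k + q_{k-1}, \qquad q_{-1}:=0 .$$

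\emph{Weak inequalities.} Because $q_k, q_{k-1} \ge 0$, the identity above immediately gives $\mu_k(f) \ge b_k(W)$.

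\emph{Strong inequalities.} Form the alternating sum $\sum_{j=0}^{k}(-1)^{k-j}$ of the coefficient identities. The $q$-terms telescope: each $q_j$ with $j<k$ occurs once with sign $(-1)^{k-j}$ and once with sign $(-1)^{k-j-1}$, so it cancels. Only $q_k$ survives, giving
$$\sum_{j=0}^k (-1)^{k-j}\mu_j(f) - \sum_{j=0}^k(-1)^{k-j} b_j(W) = q_k \ge 0 .$$
This is the stated inequality, with the signs as in the corollary.

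\emph{Euler--Poincaré formula.} Substitute $t=-1$ into the Proposition. The factor $(1+t)$ vanishes, leaving $P_f(-1)=P_W(-1)$. This is exactly $\sum_k(-1)^k\mu_k(f)=\chi(W)$, using that field coefficients give $\chi(W)=\sum(-1)^k b_k(W)$.

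\emph{Main obstacle.} There is essentially none once the Proposition is granted; the argument is purely formal algebra. The only point to check is that $P_f$ and $P_W$ are genuine polynomials, not formal series. This holds because $W$ is a finite polytopal complex, so $H_*(W)$ is finite-dimensional and vanishes above $\dim W$. In addition, Lemma \ref{l:link-formula} expresses $HMD_*(p)$ as the reduced homology of the compact polyhedron $L^-$, so each $HMD_*(p)$ is finite-dimensional and bounded in degree. Consequently $Q$ is also a polynomial, and evaluating at $t=-1$ is legitimate.
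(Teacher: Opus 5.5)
Your proposal is correct and matches the paper's route. The paper gives no separate proof of the corollary; it reads it off the identity $P_f(t)=P_W(t)+(1+t)Q(t)$ from the Proposition (following Nicolaescu). Your coefficient comparison $\mu_k=b_k+q_k+q_{k-1}$, the telescoping alternating sum leaving $q_k\ge 0$, and the evaluation at $t=-1$ are exactly that standard step.
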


%%%%%%%%%%%%%%%%%%%%%%%
\subsection{Stratified Morse theory of  the squared distance function $\sq_u : W \to \mathbb{R}$}

In this case we have several extra properties:
\begin{itemize}
\item  The bifurcation set allows a description with normal (hyper)planes,
\item The singularities on a stratum are only minima,
\item  The local normal Morse Data come from the linear part of $\sq_u$.
\end {itemize}

A \textbf{ bifurcation set} $\mathcal{B}$  is defined as follows. Let $\sigma$ be a 1 -dimensional simplex. Let $V_{\sigma}$ be the region between the two orthogonal hyperplanes at the the endpoints of $\sigma$. 
The bifurcation set $\mathcal{B}$ is defined as the union of the all boundary hyperplanes over all $\sigma$.
The complement of $\mathcal{B}$ divides $\mathbb{R}^N$ into chambers.

\begin{lemma}
$\sq_u$ has only Morse critical points on strata as soon as $u$ does not lie in the bifurcation set $\mathcal{B}$. Each stratified critical point is $\gm$.
\end{lemma}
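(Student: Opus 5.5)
We verify the conditions of Definition 2.3 of \cite{GM} for $f=\sq_u$ at a critical point $p$ of $f$ restricted to a stratum. Each stratum is the relative interior of a cell $S$ of the complex. The restriction of $\sq_u$ to the affine span $A_S$ of $S$ is a strictly convex quadratic with Hessian $2\,\mathrm{Id}$. Its only critical point is therefore the orthogonal projection $\pi_S(u)$ of $u$ onto $A_S$, and it is a non-degenerate minimum. This holds for every $u$, so the critical point on a stratum is Morse of index $0$. For a vertex the restriction is trivially Morse.

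Properness is clear, since the complex is finite and hence $W$ is compact. Isolatedness also follows: each of the finitely many strata carries at most one critical point, namely $\pi_S(u)$ when this lies in the relative interior of $S$. Together with the vertices this gives a finite set of \gm-critical points.

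The real content is the non-degeneracy of the covector $\xi=d\sq_u(p)=2\langle p-u,\cdot\rangle$. By the PL reformulation stated before the lemma, $\xi$ is degenerate if and only if some face $S^{\star}$ adjacent to $p$, with $S\subsetneq \overline{S^{\star}}$, satisfies $\xi(\mathrm{Span}(S^{\star}))=0$. Here $\mathrm{Span}(S^\star)$ means the direction space of $A_{S^\star}$. Since $p$ is critical on $S$, $u-p\perp \mathrm{Span}(S)$, so $p=\pi_S(u)$.

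Suppose $u-p\perp\mathrm{Span}(S^{\star})$ as well. Then $p=\pi_{S^\star}(u)$, so $u$ lies in the affine normal space of $A_{S^\star}$ through $p$. We reduce this to an edge condition. Take a vertex $v\in\overline{S}$, or $v=p$ if $S$ is a vertex, and an edge $\sigma=[v,w]$ of $\overline{S^\star}$ that is not contained in $A_S$. Such an edge exists because $\dim S^\star>\dim S$ and $\overline{S^\star}$ is spanned by its vertices. Since $w-v\in\mathrm{Span}(S^\star)$, we get $\langle u-p,w-v\rangle=0$. We then want to conclude that $u$ lies on one of the hyperplanes through an endpoint of $\sigma$ orthogonal to $\sigma$, i.e.\ on $\partial V_\sigma\subset\mathcal B$, which is a contradiction.

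This is the step I expect to be the main obstacle. The orthogonality only places $u$ on the hyperplane orthogonal to $\sigma$ through $p$, not necessarily through $v$ or $w$. To close the gap I would use $p=\pi_{S^\star}(u)$ and argue with the cone of $\overline{S^\star}$ at $p$. If $S$ is a vertex, then $p=v$ and $u$ lies on the orthogonal hyperplane at the endpoint $v$ of $\sigma$, so we are done directly. In general I would argue by induction on $\dim S$, reducing to the vertex case via a vertex of $\overline S$ and the edges through it. Alternatively, if the bifurcation set is meant to include orthogonal hyperplanes through faces, I would read $\mathcal B$ in that sense, since the generic condition is exactly that $u$ avoids finitely many affine subspaces $\{u: \pi_{S^\star}(u)\in A_S\}$ of codimension $\ge1$.

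Having excluded this degeneracy for $u\notin\mathcal B$, all four conditions of the definition hold. Hence each stratified critical point is a \gm-Morse critical point.
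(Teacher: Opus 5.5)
Your treatment of properness, isolatedness, the non-degeneracy of $\sq_u$ along each stratum (valid for every $u$), and the covector condition at vertices is correct. At a vertex the argument closes: if $S=\{v\}$ and $u-v\perp\mathrm{Span}(S^\star)$, then $u-v\perp w-v$ for any edge $[v,w]$ of $\overline{S^\star}$, so $u\in\partial V_{[v,w]}\subset\mathcal B$.

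The gap is the step you flagged for $\dim S\ge 1$. Neither an induction on $\dim S$ nor any other argument can close it, because with $\mathcal B$ built from edges only, the covector claim is false. Indeed $\langle u-v,w-v\rangle=\langle p-v,w-v\rangle$, and this has no reason to vanish. Here is a concrete counterexample:
\begin{itemize}
\item Let $W$ be the triangle with vertices $a=(0,0,0)$, $b=(4,0,0)$, $c=(1,2,0)$ in $\mathbb{R}^3$, and let $u=(t,0,z)$ with $0<t<4$.
\item The critical point of $\sq_u$ on the open edge $(a,b)$ is $p=(t,0,0)$.
\item $d\sq_u(p)=2\langle p-u,\cdot\rangle$ vanishes on the plane $z=0$, which is the span of the $2$-cell. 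So $p$ is not GM-Morse.
\item Yet $\mathcal B$ consists of the six planes $x=0$, $x=4$, $x+2y=0$, $x+2y=5$, $3x-2y=12$, $3x-2y=-1$, and none of them meets this strip.
\end{itemize}
The strip is an open piece of the hyperplane $y=0$. So this is a codimension-one phenomenon that any bifurcation set must contain. Only the tangential non-degeneracy, which you proved for all $u$, survives with the edge-only $\mathcal B$.

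So your alternative reading is not optional: the lemma needs a larger bifurcation set. It suffices to add, for every cell $\tau$ and every facet $S$ of $\tau$, the hyperplane
\[
H_{S,\tau}=A_S+\mathrm{Span}(\tau)^{\perp}=\{u:\pi_{A_\tau}(u)\in A_S\}.
\]
It has dimension $\dim S+N-\dim\tau=N-1$. For $\dim\tau=1$ these are exactly the hyperplanes $\partial V_\sigma$, so the edge-only definition is adequate only for PL curves.

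To see that these hyperplanes suffice, suppose $u-p\perp\mathrm{Span}(S^\star)$ with $S\subsetneq\overline{S^\star}$. Choose a cell $T$ with $S\subset\overline T\subseteq\overline{S^\star}$ and $\dim T=\dim S+1$; this is possible because face lattices of polytopes are graded. Then $u-p\perp\mathrm{Span}(T)$ and $p\in A_S$ give $u\in H_{S,T}$.

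With $\mathcal B$ enlarged in this way, the rest of your argument is complete. With $\mathcal B$ as defined, your concluding sentence is not merely unjustified but false.
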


\noindent
Next we determine the two types of Morse Data:

\noindent
{\bf Tangential Morse Data.} (Note that  all strata are linear).
\begin{lemma}The restriction of $\sq_u$ to a stratum, has  
\begin{itemize}
\item one interior critical point, which is clearly a local Morse minimum, or
\item  no critical point, or
\item $\sq_u$ is constant.
\end{itemize}
In case of one interior critical point:
 the Tangential Morse Data are homotopy equivalent  to the topological pair $(point, void \; set)$. 
\end{lemma}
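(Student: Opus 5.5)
The plan is to restrict $\sq_u$ to the relatively open cell $S$ and use that $S$ is an open convex subset of an affine subspace $A=\mathrm{aff}(S)$. Write $\pi_A(u)$ for the orthogonal projection of $u$ onto $A$. By Pythagoras, for $x\in A$,
$$\sq_u(x)=|u-\pi_A(u)|^2+|\pi_A(u)-x|^2 .$$
So on $A$ the function $\sq_u$ is a constant plus a positive definite quadratic form centred at $\pi_A(u)$. Its gradient along $A$ is $2(x-\pi_A(u))$, which vanishes only at $x=\pi_A(u)$. This splits the argument into cases.

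\textbf{Case $\dim S=0$.} Then $S$ is a vertex and $\sq_u|_S$ is constant. This is the third alternative; it is also the reason every vertex is a \gm-critical point.

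\textbf{Case $\dim S\ge 1$.} There are two subcases. If $\pi_A(u)\in S$, then $\sq_u|_S$ has exactly one critical point, namely $\pi_A(u)$. It is interior because $S$ is relatively open, and it is a nondegenerate minimum because the Hessian is $2\,\mathrm{Id}$. If $\pi_A(u)\notin S$, then $\sq_u|_S$ has no critical point. These two subcases give the first two alternatives.

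It remains to compute the Tangential Morse Data in the first subcase. Put $p=\pi_A(u)$ and $c=\sq_u(p)$. Inside a small ball $B_\delta(p)\cap S$, which is a disc $D^s$ with $s=\dim S$, the sublevel set $\{\sq_u\le c+\epsilon\}$ is a small round $s$-disc centred at $p$. The set $\{\sq_u\le c-\epsilon\}$ is empty. Hence the Tangential Morse Data is the pair $(D^s,\emptyset)$, which is homotopy equivalent to $(point,\emptyset)$. This is the index-$0$ case $\lambda=0$ of the attaching formula after Theorem \ref{t:data-product}.

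The only step needing care is the case split between constant functions and isolated critical points. Nothing is really an obstacle, since the computation is just Pythagoras on an affine subspace.
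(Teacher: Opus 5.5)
Your proposal is correct and takes the same approach as the paper. The paper's proof is only the remark that this is ``a well known fact from high-school geometry''; your projection-plus-Pythagoras computation supplies exactly that argument, including the observation that the constant case occurs only for vertices and that the tangential data is $(D^s,\emptyset)\simeq(\mathrm{point},\emptyset)$.
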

\begin{proof}
This resembles a well known fact from high-school geometry
\end{proof}

\noindent
{\bf Normal Morse Data.} In the normal direction $\sq_u$ acts as a linear function on PL  manifold of lower dimension.  The critical point on the normal slice is the vertex $p$. The Normal Morse Data follows e.g. from the PL-description in \cite{GKR}. We have:

\begin{prop}
For the squared distance function on a PL-space
\begin{itemize}
\item the local Morse Data is homotopy equivalent to the local normal Morse Data and therefore
\item $ HMD_{\star}(p) \cong   HNMD_{\star}(p) $
\end{itemize}

\end{prop}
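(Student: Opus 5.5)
The plan is to combine the product decomposition of Theorem \ref{t:data-product} with the Tangential Morse Data lemma for $\sq_u$. Fix $u \notin \mathcal{B}$ and a \gm-critical point $p$ of $\sq_u$ on a stratum $S$ of dimension $s$. By the preceding lemma, every stratified critical point is then Morse in the sense of \gm. So Theorem \ref{t:data-product} gives a homeomorphism
$$\mbox{Local Morse Data}(p) \cong \mbox{Tangential Morse Data}(p) \times \mbox{Normal Morse Data}(p).$$

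The first step identifies the tangential factor. The restriction $\sq_u|_S$ is the squared distance from $u$ to an affine cell. Its critical point $p$ is the orthogonal projection of $u$ onto $\mathrm{Span}(S)$, lying in the interior of $S$. It is a nondegenerate minimum, since the Hessian is $2\,\mathrm{Id}$ on $T_pS$. Hence the index is $\lambda=0$. In the attaching formula following Theorem \ref{t:data-product}, the factor $(D^{\lambda},\partial D^{\lambda})\times D^{s-\lambda}$ becomes $(D^0,\emptyset)\times D^{s}$. This is homotopy equivalent to $(\mathrm{point},\emptyset)$, in agreement with the Tangential Morse Data lemma.

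The second step uses the product convention $(A_1,B_1)\times(A_2,B_2)=(A_1\times A_2,\,A_1\times B_2\cup B_1\times A_2)$. With $(A_1,B_1)=(D^s,\emptyset)$, the local Morse Data becomes $(D^s\times A_2,\, D^s\times B_2)$. Contracting the disc factor gives a homotopy equivalence of pairs onto $(A_2,B_2)$, the Normal Morse Data. This proves the first bullet.

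For the second bullet, apply homology to this homotopy equivalence of pairs. Equivalently, use the Künneth formula $HMD_\star(p)=HTMD_\star(p)\otimes HNMD_\star(p)$ over a field, together with $HTMD_\star(p)=H_\star(\mathrm{pt},\emptyset)$, which is the field concentrated in degree $0$. Either way $HMD_\star(p)\cong HNMD_\star(p)$.

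The main obstacle is justifying that the hypotheses of Theorem \ref{t:data-product} hold, in particular that $d\sq_u(p)$ is nondegenerate for $W$ in the PL sense. That is, no adjacent face $S^\star\supsetneq S$ may have $\mathrm{Span}(S^\star)$ annihilated by $d\sq_u(p)=2(p-u)$. I would argue as follows. If some such face were annihilated, then $u-p$ would be orthogonal to $\mathrm{Span}(S^\star)$. The critical set of $\sq_u|_{S^\star}$ would then include the point $p$ on the boundary of $S^\star$. Taking an edge $\sigma$ of $S^\star$ issuing from a vertex in the closure of $S$ orthogonally to $u-p$, this places $u$ on one of the boundary hyperplanes of $V_\sigma$, hence $u\in\mathcal{B}$, contradicting genericity. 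The degenerate cases (a constant restriction, or no interior critical point) do not produce \gm-critical points in the interior of the stratum and so need no treatment. Vertices are the special case $s=0$, where the tangential factor is literally a point and the statement is immediate.
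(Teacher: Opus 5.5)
Your main line is the argument the paper intends; the paper gives no separate proof. It runs through the product decomposition of Theorem~\ref{t:data-product}, the index-$0$ tangential minimum giving $(D^0,\emptyset)\times D^s\simeq(\mathrm{pt},\emptyset)$, the projection $(D^s\times A_2,\,D^s\times B_2)\to(A_2,B_2)$, and the tensor-product formula over a field. That part is fine.

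The step that fails is your final paragraph, where you derive the Morse hypothesis from $u\notin\mathcal{B}$. It works for $s=0$. Now suppose $\dim S\ge 1$, $w$ is a vertex of $\overline{S}$, and $\sigma$ is an edge of $S^\star$ at $w$ with direction $e_\sigma$ pointing away from $w$. Then
\[
\langle u-w,e_\sigma\rangle=\langle u-p,e_\sigma\rangle+\langle p-w,e_\sigma\rangle=\langle p-w,e_\sigma\rangle .
\]
Here $p-w\neq 0$ lies in the direction space of $S$ and has no reason to be orthogonal to $e_\sigma$. If $S$ is an edge and $\sigma=\overline{S}$, this number is even strictly positive. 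So nothing places $u$ on a hyperplane of $\mathcal{B}$.

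In fact the implication is false with $\mathcal{B}$ built from edges only. Take the following data:
\begin{itemize}
\item $W$ is a closed triangle $S^\star=[A,B,C]\subset\mathbb{R}^3$ without right angles;
\item $S$ is the open edge $AB$ and $q\in S$;
\item $n\neq 0$ is normal to the plane of the triangle, and $u=q+n$.
\end{itemize}
Then $p=q$ is the minimum of $\sq_u|_S$, and $d\sq_u(p)=-2n$ annihilates $\mathrm{Span}(S^\star)$, so the covector is degenerate. Yet $\langle u-w,e_\sigma\rangle=\langle q-w,e_\sigma\rangle$ is nonzero for all six edge/endpoint pairs once $q$ avoids at most two points, so $u\notin\mathcal{B}$.

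More generally, write $\mathrm{aff}$ for the affine hull and $\mathrm{dir}$ for its direction space. For cells $S\subsetneq S^\star$, the degenerate $u$ fill an open piece of $\mathrm{aff}(S)+\mathrm{dir}(S^\star)^{\perp}$. This has codimension $\dim S^\star-\dim S$, and the edge hyperplanes do not cover it in general when $\dim S\ge 1$.

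The remedy is to enlarge the exceptional set. Let $V_\sigma$ be the set of points whose orthogonal projection to $\mathrm{aff}(\sigma)$ lies in $\sigma$, and require $u\notin\partial V_\sigma$ for every cell $\sigma$. For edges this recovers the paper's hyperplanes. It suffices because if $d\sq_u(p)$ killed $\mathrm{dir}(S^\star)$, the projection of $u$ to $\mathrm{aff}(S^\star)$ would be $p$, a relative boundary point of $S^\star$, so $u\in\partial V_{S^\star}$.

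Under that assumption the rest of your proof goes through unchanged. The paper itself just takes the Morse property from its lemma, and it is exactly that lemma which needs the larger exceptional set.
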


\begin{prop}
For the squared distance function on a PL-space: The indices  at $p$ are equal to the indices on the normal slice at $p$. An upper bound is the dimension of $W_{c-\epsilon} \cap B_{\delta}$.
\end{prop}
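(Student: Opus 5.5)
The plan is to read the indices off the product decomposition of Theorem \ref{t:data-product}, using the tangential description of the squared distance function from the preceding lemma. First, the tangential part. At a \gm-critical point $p$ on a stratum $S$ of dimension $s$, the preceding lemma says that $\sq_u|_S$ has a Morse minimum at $p$. So the tangential index is $\lambda = 0$, and the tangential Morse Data is $(D^0,\partial D^0)\times D^{s} \simeq (\mathrm{pt},\emptyset)$. Hence $HTMD_\star(p)$ is the field concentrated in degree $0$. Inserting this into the Künneth identity $HMD_\star(p)=HTMD_\star(p)\otimes HNMD_\star(p)$ gives $HMD_k(p)\cong HNMD_k(p)$ for every $k$, with no degree shift. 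This is exactly the previous proposition, read degree by degree, so the indices and their multiplicities at $p$ coincide with those of the normal slice.

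Second, the normal slice. By Proposition 3.6.2 of \cite{GM} we may replace $\sq_u$ on $N_\delta(p)\cap W$ by its linear part $x\mapsto 2\langle p-u,x-p\rangle$. This makes $N_\delta(p)\cap W$ a polytopal cone with apex $p$, carrying a linear function whose only vertex is $p$. Lemma \ref{l:link-formula}, applied inside the slice, then gives $HNMD_k(p)\cong \tilde H_{k-1}(L^-)$, where $L^-=N_\delta(p)\cap W_{c-\epsilon}$.

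Third, the bound. It suffices to show that $\tilde H_{k-1}(L^-)$ vanishes for $k-1\ge \dim L^-+1$, that is, $k\le \dim L^-+1$. A naive count gives $\dim L^- = \dim(N_\delta(p)\cap W)-1$. Since the slice is the cone over $L^-$, the cone has dimension $\dim L^-+1$, so the index is bounded by the dimension of the lower part of the normal slice, $\dim (N_\delta(p)\cap W_{\le c-\epsilon}) \le \dim(W_{c-\epsilon}\cap B_\delta)$. For the stated bound I would argue via the Morse Data pair itself. The local Morse Data is $(W_{[c-\epsilon,c+\epsilon]}\cap B_\delta,\, W_{c-\epsilon}\cap B_\delta)$, and by Corollary 3.11.2 it is homotopy equivalent to $(\mathrm{cone}(L^-),L^-)$. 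The long exact sequence of this pair, together with the contractibility of the cone, gives $H_k \cong \tilde H_{k-1}(W_{c-\epsilon}\cap B_\delta)$. This vanishes once $k-1 > \dim(W_{c-\epsilon}\cap B_\delta)$, because homology of a polyhedron vanishes above its dimension.

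The main obstacle is matching the dimension count precisely to the stated bound $\dim(W_{c-\epsilon}\cap B_\delta)$ rather than that dimension plus one. The top degree is attained only when $L^-$ has top homology, which happens when $L^-$ is a closed pseudomanifold-like piece, i.e.\ $p$ is a local maximum on the normal slice. In that case I would check that $W_{c-\epsilon}\cap B_\delta$ here means the full sublevel set $W_{\le c-\epsilon}$ inside the ball, whose dimension is one more than that of the level, so that $\tilde H_{k-1}(L^-)$ with $\dim L^- = \dim(W_{\le c-\epsilon}\cap B_\delta)-1$ yields exactly $k\le \dim(W_{\le c-\epsilon}\cap B_\delta)$. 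I would first try to settle this convention from the notation section, interpreting $W_{c-\epsilon}$ as in the Morse Data definition. If only the level set is meant, I would fall back on stating the bound with the $+1$ shift.
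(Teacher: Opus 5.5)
Your identification of the indices is correct and follows the route the paper relies on. The paper gives no separate proof of this proposition. It rests on the lemma that $\sq_u$ restricted to a stratum has a Morse minimum, so the tangential data are $\simeq(\mathrm{pt},\emptyset)$, and on the preceding proposition $HMD_\star(p)\cong HNMD_\star(p)$. Your derivation of $k\le\dim L^-+1$ from $HNMD_k(p)\cong\tilde H_{k-1}(L^-)$ (Lemma~\ref{l:link-formula}) is the content behind the second sentence.

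On the bound, your off-by-one suspicion is justified, and you should state it as a fact rather than something to check. With the paper's notation $W_b=f^{-1}(b)$ the claim is false:
\begin{itemize}
\item at a vertex maximum of a closed polygon the index is $1$, while $W_{c-\epsilon}\cap B_\delta$ is two points;
\item at a vertex maximum of a $d$-manifold the index is $d$, while the level set is a $(d-1)$-sphere;
\item at any minimum the level set is empty.
\end{itemize}
So the second sentence needs the sublevel reading, with minima set aside. That reading is forced in the paper's definition of $HMD_\star(p)$, since $(W_{c+\epsilon}\cap B_\delta,W_{c-\epsilon}\cap B_\delta)$ is a pair only for sublevel sets.

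Your dimension bookkeeping, however, conflates $L^-$ with the full level set. By Theorem~\ref{t:data-product} with $\lambda=0$, the lower part of the local Morse Data is $W_{c-\epsilon}\cap B_\delta\cong D^s\times L^-$. Hence $\dim(W_{c-\epsilon}\cap B_\delta)=s+\dim L^-$, and your identity $\dim L^-=\dim(W_{\le c-\epsilon}\cap B_\delta)-1$ holds only at vertices. You only need the inequality $\dim L^-+1\le\dim(W_{\le c-\epsilon}\cap B_\delta)$, and it goes the right way, so the sublevel version survives. The correct count also gives two further points:
\begin{itemize}
\item even the level-set version holds whenever $s\ge 1$ and $p$ is not a minimum, so its failures are confined to vertices (e.g.\ maxima) and minima;
\item passing to $\tilde H_{k-1}(W_{c-\epsilon}\cap B_\delta)$ in your third paragraph discards the sharper bound $k\le\dim L^-+1\le\dim(N_\delta(p)\cap W)$. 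This equals $d-s$ for a $d$-manifold and is what the paper uses afterwards.
\end{itemize}

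Two smaller slips. First, the normal slice is the cone on $L^+\cup_{\partial L^{\pm}}L^-$, not on $L^-$; only its part $N_\delta(p)\cap W_{\le c}$ is, up to homotopy, the cone on $L^-$. Second, ``top homology of $L^-$ only at a maximum'' uses that the link is a sphere, so it requires $W$ to be a PL-manifold, not a general PL-space.
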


\subsection{Next we restrict to the PL-manifold case}  Consider  a  d-dimensional PL-manifold $W$, embedded in $\mathbb{R}^N$.  The normal slices will have dimension  $d-s$, where $s$  is the dimension of the stratum. The range is therefore $0,1,\cdots d$.\\  

In the  PL-manifold case the link of any point is a topological sphere. This also holds for the link at a normal slice.
According to Theorem 5 and its corollary, the space $L^{-}$ determines the homotopy type.  Note that $\partial L^{\pm}$ divides the link at $p$ into the regions $L^{+}$ and $L^{-}$, which are manifolds with boundary. Therefore the multiplicity in the top dimension $d-s$ is equal zero, unless we are in the case of a maximum, where the boundary is void.

In the differentiable  case we  have the standard description of a non-degenerate critical point in terms of the index, with the usual names: minimum, saddle point and maximum. In the PL-case there can occur a big variety of types. For isolated critical points the name minimum or maximum has a well-defined topological meaning. All the other points will be called (multi) saddles, described by their index and multiplicities. E.g. A money saddle has index 1 with multiplicity 2. We define the Morse vector by listing the local Morse Data; e.g. [0,2,0] for the usual monkey saddle and [0,k,0] for the more complicated saddles.
In higher dimension we we get e.g. $[0,k_1,k_2,0]$  for a mixed saddle for a 3-dimension PL-manifold.
Note that this measures only the homology; as we mentioned before: in dimension $\ge 5$ critical points can occur, where all the homologies are $0$.\\

\noindent
Note that we can also use the PL-version of lemma \ref{l:link-formula}, which is formulated in \cite{GKR}.
This uses a PL-description of the link.
\begin{equation}\label{Morse2link}
 HMD_{\star}(p) :=H_{\star}(W_{c}  \cap  B_{\delta}, W_{c} \cap  B_{\delta}-\{p\}) = \tilde{H}_{*-1}(lk^{-}v)\end{equation}
where $lk^{-}v =lk \; v \cap W_{\le}c $.\\

\begin{prop} We have
\begin{itemize}
\item  $HMD_{0}(p) = 0$ , unless we have a minimum in $p$.
\item  $HMD_{top}(p) = 0$, unless we have a maximum in $p$.
\end{itemize}
\end{prop}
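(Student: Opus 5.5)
We prove both items from the link formula (\ref{Morse2link}) together with Lemma~\ref{l:link-formula}:
$$HMD_{k}(p)\cong \tilde H_{k-1}(L^{-}) .$$
By the earlier Proposition, $HMD_\star(p)\cong HNMD_\star(p)$. So we may work entirely on the normal slice $N_\delta(p)$. There $\sq_u$ acts as a linear function, and its link at $p$ is a PL sphere $S^{d-s-1}$, split by $\partial L^{\pm}$ into the two manifolds-with-boundary $L^{+}$ and $L^{-}$. The two items are then statements about $\tilde H_{-1}(L^-)$ and $\tilde H_{top-1}(L^-)$.

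\textbf{Item 1.} Take $k=0$. We need $\tilde H_{-1}(L^{-})$, using the convention that $\tilde H_{-1}(X)$ equals the field if $X=\emptyset$ and vanishes otherwise. Hence $HMD_0(p)\neq 0$ exactly when $L^{-}=\emptyset$. This means no point of the normal slice near $p$ has value below $c$. Since the tangential data is that of a minimum (the Lemma on tangential Morse Data), $p$ is then a local minimum of $\sq_u$ on $W$. Conversely, if $p$ is not a minimum, then $L^-\neq\emptyset$ and $HMD_0(p)=0$. For readers who avoid the $\tilde H_{-1}$ convention, I will also check this directly: $H_0(W_{\le c}\cap B_\delta,\,W_{\le c}\cap B_\delta\setminus\{p\})=0$ whenever $p$ lies in the closure of the lower part, because $p$ is then joined by a path to points of $W_{<c}$.

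\textbf{Item 2.} The top index of the normal slice is $m=d-s$, so we must show $\tilde H_{m-1}(L^{-})=0$ unless $L^{+}=\emptyset$. Here $L^{-}$ is a compact $(m-1)$-dimensional PL manifold with boundary $\partial L^{\pm}$, embedded in the sphere $S^{m-1}$.
\begin{itemize}
\item If $L^{+}\neq\emptyset$, then $L^{-}$ is a proper subset of the connected sphere, so no component of $L^-$ is a closed $(m-1)$-manifold. By the standard fact (each component is a connected $(m-1)$-manifold with nonempty boundary, or is noncompact), $H_{m-1}(L^{-};\mathbb F)=0$. Equivalently, Alexander duality gives $\tilde H_{m-1}(L^-)\cong \tilde H^{-1}(S^{m-1}\setminus L^-)=0$ since the complement is nonempty. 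This second argument is cleaner and I would use it.
\item If $L^{+}=\emptyset$, then $L^-=S^{m-1}$, $p$ is a local maximum in the normal direction, and $HMD_m\neq 0$.
\end{itemize}

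The main obstacle is justifying that $L^{-}$ is a genuine subset of the link sphere with complement $L^{+}\setminus\partial L^{\pm}$, up to homotopy. This requires identifying $N_\delta(p)\cap W_{c-\epsilon}$ with $lk^-p$ via the linear PL structure. I would cite \cite{GKR} for the PL link description together with the remark above that $\partial L^{\pm}$ divides the link. A remaining wording issue is that a normal maximum combined with a tangential minimum is not a maximum of $\sq_u$ on $W$ unless $s=0$. I would phrase item 2 as \emph{``maximum in the normal slice''}, noting that for $\sq_u$ this is what ``maximum'' means in the Proposition.
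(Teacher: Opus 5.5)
Your proof is correct and is essentially the paper's argument: via $HMD_k(p)\cong\tilde H_{k-1}(L^-)$, item 1 amounts to $L^-=\emptyset$ (a minimum) and item 2 to $L^-$ filling the whole link sphere (a maximum). The paper gets the latter from its earlier remark that $L^{\pm}$ are manifolds with boundary dividing the link, and your Alexander duality step makes this explicit; unlike your first bullet, it also covers $m=1$, where $S^0$ is disconnected and reduced homology is needed. Your caveat is right: if ``top'' means $d-s$, then ``maximum'' in item 2 must mean a maximum on the normal slice, whereas if ``top'' means $d$ the statement holds literally, since $HMD_d(p)=0$ for $s>0$ for dimension reasons.
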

\begin{proof}
In the first case $L^{-}$ is void, therefore all values from points different from $P$  are $>c$;\\
In the second case  $L^{-}$ corresponds to the whole link at $p$, therefore all values from points different from $P$  are $>c$.
\end{proof}

As we can see critical points of higher index only can occur at strata of low dimension, eg.:
\begin{prop}
Maxima of $\sq_u$ can only arise at vertices.
\end{prop}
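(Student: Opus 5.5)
The plan is to show that a maximum of $\sq_u$ forces the tangential part of the local Morse Data to be trivial, and then use the tangential description of $\sq_u$ on a stratum. Let $p$ be a point of the PL-manifold $W$ where $\sq_u$ has a local maximum, isolated as a \gm-critical point. We assume $u\notin\mathcal{B}$, so that the Lemma on the bifurcation set applies and $p$ is a Morse \gm-critical point. Let $S$ be the stratum (open cell) containing $p$, and let $s=\dim S$. We must show $s=0$.

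The key step is to restrict $\sq_u$ to $S$. Since $p$ is a local maximum of $\sq_u$ on $W$, it is also a local maximum of $\sq_u|_S$, and it is a critical point of $\sq_u|_S$. By the Lemma on Tangential Morse Data, $\sq_u|_S$ has three possible behaviours:
\begin{itemize}
\item it has no critical point, which is excluded because $p$ is one;
\item it is constant, which is excluded because $u\notin\mathcal{B}$ and the critical points are isolated;
\item it has a unique interior critical point, which is a strict Morse minimum.
\end{itemize}
In the third case, $\sq_u(x)=|u-x|^2$ restricted to the affine span of $S$ equals $|u-\pi(u)|^2+|\pi(u)-x|^2$, where $\pi$ is orthogonal projection onto that span. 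This is strictly convex, so if $s\ge 1$, points of $S$ near $p$ have strictly larger value than $p$. That contradicts $p$ being a local maximum. Hence $s=0$, i.e. $p$ is a vertex.

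As a consistency check in the language of Morse Data: by Theorem~\ref{t:data-product} and its corollary, the local Morse Data is $(D^{\lambda},\partial D^{\lambda})\times D^{s-\lambda}\times(\text{normal data})$ with tangential index $\lambda=0$. The top degree homology is then at most the top degree of the normal data, which is $d-s$. Since a maximum on a $d$-manifold has $HMD_d(p)\neq 0$ (by (\ref{Morse2link}), $lk^{-}p$ is the whole link, a $(d-1)$-sphere), we need $d-s\ge d$, again giving $s=0$.

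The only delicate point is the second case above, where $\sq_u$ is constant on $S$. This happens only when $S$ is not isolated-critical, and it is excluded by genericity ($u\notin\mathcal{B}$). I would state explicitly that the proposition is meant for such generic $u$, or else note that a constant restriction cannot give an isolated maximum. Even without the isolation assumption, the strict convexity argument of the third case shows that a local maximum of a strictly convex function on an open cell of positive dimension is impossible. It also shows the constant case cannot occur at all, since $\sq_u$ restricted to any affine subspace of positive dimension is strictly convex. So the argument reduces cleanly to strict convexity of $\sq_u$ along lines.
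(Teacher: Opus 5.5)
Your proof is correct, but your main argument takes a more direct route than the one the paper intends. The paper gives no separate proof. It presents the proposition as an instance of the remark that ``critical points of higher index only can occur at strata of low dimension''. That remark rests on the product decomposition of the local Morse Data: the tangential index is $0$, so the indices at $p$ are those of the normal slice, which has dimension $d-s$. A maximum has nonzero homology in degree $d$, hence $s=0$. This is exactly your ``consistency check''. Your primary argument bypasses Morse Data altogether: $\mathbb{D}_u$ is strictly convex along every line, so it cannot have a local maximum in the relative interior of a cell of positive dimension. This buys generality and economy. It needs neither $u\notin\mathcal{B}$, nor isolation of the maximum, nor the manifold hypothesis. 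It also shows, as you note, that the ``constant'' alternative in the paper's tangential lemma is vacuous for strata of positive dimension. The paper's route instead fits the statement into the general bound (index $\le d-s$ on an $s$-dimensional stratum) that organizes the low-dimensional tables. The price is that it depends on the product theorem, and hence on genericity and the PL-manifold setting.
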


\begin{prop}
For a PL-manifold of dimension $d$ holds the duality: $HMD_{\star}({\sq_u})(p) = HMD_{d - \star}({-\sq_u}) (p)$
\end{prop}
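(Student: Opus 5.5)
The plan is to reduce the statement to the link formula (\ref{Morse2link}) and then use Alexander duality in the link sphere. The link $lk\,p$ of a point in a $d$-dimensional PL-manifold is a PL $(d-1)$-sphere. By the earlier Proposition on the squared distance function, only the normal slice matters, and there $\sq_u$ may be replaced by its linear part $\ell = d\sq_u(p)$. For a Morse \gm-critical point, $\ell$ is nonconstant on every cell adjacent to $p$, so no point of the link other than transversal ones lies at level $c$. The link therefore splits as
$$lk\,p = lk^{-}p \cup lk^{+}p,$$
where $lk^{-}p=lk\,p\cap\{\ell\le c\}$ and $lk^{+}p=lk\,p\cap\{\ell\ge c\}$. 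These are compact PL subspaces, and they meet along $\partial L^{\pm}=lk\,p\cap\{\ell=c\}$. For $-\sq_u$ the roles of $lk^{-}$ and $lk^{+}$ are interchanged, so the lower link of $-\sq_u$ is $lk^{+}p$.

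I will compare the reduced homologies of the two pieces. Write
$$A=lk^{-}p,\qquad B=lk^{+}p,\qquad S=lk\,p\cong S^{d-1}.$$
First, $B$ is a deformation retract of $S\setminus A$. To see this, push points with $\ell>c$ slightly down along the PL structure towards the level set; equivalently, $S\setminus A=\{\ell>c\}$ and $B=\{\ell\ge c\}$ are homotopy equivalent by a collar of the level set $\{\ell=c\}$ in $B$. This collar exists because $\ell$ is linear on cells and the level is not a vertex value on the link. Next, Alexander duality in $S^{d-1}$ with field coefficients gives
$$\tilde H_j(A)\cong \tilde H^{\,d-2-j}(S\setminus A)\cong \tilde H_{d-2-j}(B).$$
Combining this with the link formula,
$$HMD_k(\sq_u)(p)=\tilde H_{k-1}(A)\cong\tilde H_{d-1-k}(B)=HMD_{d-k}(-\sq_u)(p).$$
The last step applies the link formula to $-\sq_u$, whose lower link is $B$. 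This is exactly the claimed duality.

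The degenerate cases must be checked separately, since Alexander duality needs the conventions $\tilde H_{-1}(\emptyset)=$ field and $\tilde H_*(\emptyset)$ otherwise zero. If $p$ is a minimum, then $A=\emptyset$ and $B=S$. In that case $HMD_0(\sq_u)$ is one-dimensional, and so is $HMD_d(-\sq_u)=\tilde H_{d-1}(S^{d-1})$; all other groups vanish. The maximum case is symmetric. These agree with the preceding Proposition on $HMD_0$ and $HMD_{top}$.

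The main obstacle I expect is justifying the link computation when $p$ lies on a stratum of positive dimension $s$. There the relevant sphere is the normal link, of dimension $d-s-1$, not $d-1$, so one must check that the indices still shift correctly. I would handle this with the tangential factor. The tangential Morse data of both $\sq_u$ and $-\sq_u$ on the stratum contribute a cell: a minimum (index $0$) for $\sq_u$ and a maximum (index $s$) for $-\sq_u$. Normal duality in dimension $d-s$, combined with this index shift of $s$ in the tensor product $HTMD\otimes HNMD$, gives total duality $k\mapsto d-k$. Since the full link of $p$ in $W$ is the join of $S^{s-1}$ with the normal link, one can alternatively apply the argument above directly to the full $(d-1)$-sphere link, which avoids the splitting altogether. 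I would present that version, and need only verify that the link formula of \cite{GKR} applies to $-\sq_u$ in the same way.
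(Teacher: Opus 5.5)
Your core argument (link formula, then Alexander duality between the lower and upper half-links, which are complementary up to a collar in the link sphere) is the paper's proof. The paper's proof is only the remark ``by Alexander duality via $L^{+}$ and $L^{-}$ in a $(d-1)$-sphere''. Your tangential bookkeeping for a stratum of dimension $s>0$ is correct, and it supplies what that remark leaves implicit, since the paper's $L^{\pm}$ live in the normal link, an $S^{d-s-1}$. The bookkeeping runs as follows: $HTMD_\star(\sq_u)$ sits in degree $0$ and $HTMD_\star(-\sq_u)$ in degree $s$. Normal duality $HNMD_k(\sq_u)\cong HNMD_{d-s-k}(-\sq_u)$ then gives $HMD_k(\sq_u)\cong HMD_{d-k}(-\sq_u)$.

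However, the ``alternative'' you say you would actually present does not work: running the same argument with the linear part $\ell$ on the full link $S^{s-1}*N$. Because $p$ is a critical point of $\sq_u$ restricted to its stratum $X$, $\ell$ vanishes on $T_pX$, so $\ell\equiv c$ on the whole $S^{s-1}$ factor. Your claim that only transversal points of the link lie at level $c$ fails there, and three things go wrong.
First, $\{\ell\le c\}=S^{s-1}*N^{-}$ and $\{\ell\ge c\}=S^{s-1}*N^{+}$ overlap along all of $S^{s-1}$. Second, the complement $\{\ell>c\}$ is homotopy equivalent to $N^{+}$, not to $S^{s-1}*N^{+}$. Third, the link formula with $A=S^{s-1}*N^{-}$ returns $\tilde H_{k-1-s}(N^{-})$ instead of the correct $\tilde H_{k-1}(N^{-})$.

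For example, let $p$ be the foot of the perpendicular in the interior of a top-dimensional face ($s=d$, $N=\emptyset$). Then $\{\ell\le c\}$ is the entire $S^{d-1}$, and you would read off a maximum where $\sq_u$ has a minimum.

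On the full link the tangential directions lie above level $c$ only because of the quadratic term of $\sq_u$. A one-sphere version must therefore use the genuine sublevel set $lk\,p\cap\{\sq_u\le c\}\simeq N^{-}$, whose complement is $\simeq S^{s-1}*N^{+}$. The collar must then be justified by a regular-value argument for $\sq_u$ on the link, not by linearity on cells. The simpler course is to present the tangential-shift version you already have.
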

\begin{proof}
by Alexander Duality via $L^+$ and $L^{-}$ in a $d-1$ sphere.
\end{proof}

%%%%%%%%%%%%%%%%%%%%%%%%%%%%%%%
\section{Low dimensional results and examples}\label{lowdim}
%%%%%%%%%%%%%%%%%%%%%%%%%%%%5%%5
We describe the local  behavior of $\sq_u$ in terms of topo-regular and critical points and their types.
At vertices we use the linear approximation. In these cases  \cite {Banch}, \cite{Ba3} and  \cite{GKR} applies.

\subsection{PL-manifolds of dimension $1$}\label{ss:dim1}
We have, depending on the position of $u$:\\
\textbf{On edges}
\begin{itemize}
\item $\sq_u$ is topo-regular: no $\gm$-point,
\item there is exactly one minimum on the edge: $NMD(p)$ is void. So index $0$  with multiplicity $1$.
\end{itemize}

\textbf{On vertices}
\begin{itemize}
\item $\sq_u$ can be topo-regular, or
\item $\sq_u$  has maximum (index $1$) or minimum (index $0$) both with multiplicity $1$.
\end{itemize}

%%%ADD PIC ?? %%<<<%%%%%%%%%%%%%%%

\subsection{PL-manifolds of dimension $2$} \label{ss:dim2} 
We have, depending on the position of $u$:\\
\textbf{On 2-faces}
\begin{itemize}
\item  $\sq_u$ is topo-regular: no $\gm$- point, or
\item there is exactly one minimum on the edge: NMD is void. So index 0 with multiplicity $1$.
\end{itemize}
\textbf{On edges}
\begin{itemize}
\item $\sq_u$ is topo-regular,
\item $\sq_u$ has  a saddle (index $1$) or a minimum (index $0$) both of multiplicity $1$ (normal data is 1-dimensional),
\end{itemize}
This follows from the normal slice and the  1-dimensional case above.
\textbf{On vertices} (see Proposition 6.1 of \cite{GKR}):
\begin{itemize}
\item local minimum (index $0$, multiplicity $1$,
\item saddle point (index 1, multiplicity arbitrary)
\item local maximum (index $2$, multiplicity $1$,
\end{itemize}
We list the local Morse Data in the left hand  side of Table 1.

\begin{figure}[h]

\includegraphics[width=12cm]{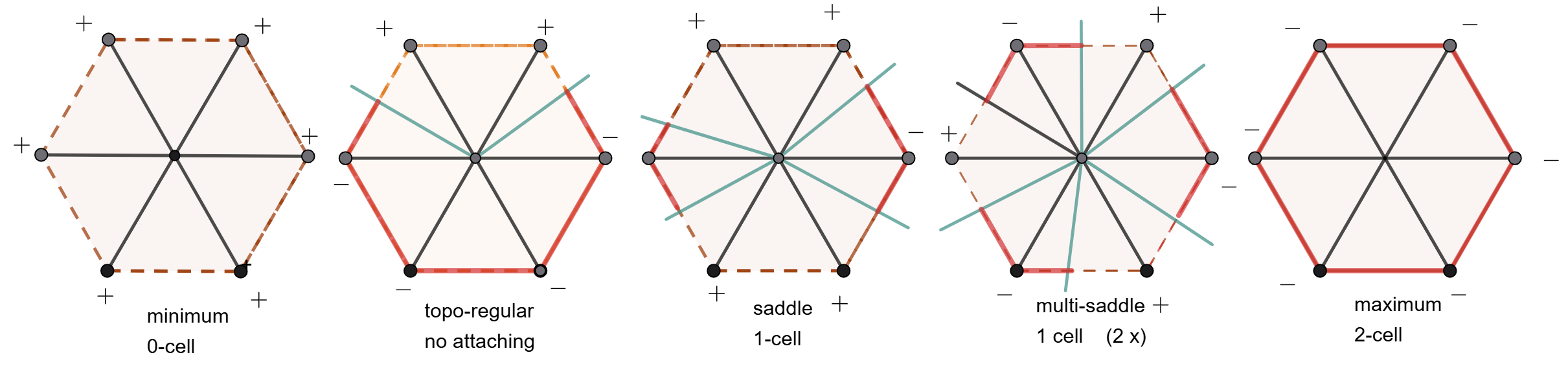}
\caption{Several types in dimension 2} \label{fig:dim2types}
\end{figure}
About Figure \ref{fig:dim2types} : the pictures show a star-neighborhood of $v$. We use formula (\ref{Morse2link}) and mark $lk^{-}v $. 
A neighborhood of  $v$  is via a PL-map transformed in a planar picture. We took hexagons, but similar pictures exist for arbitrary k-gons around $v$.

\begin{table}\label{m-table}
\parbox{.40\linewidth}{
\centering
\begin{tabular}{| c | c | c| c |}
\hline
\text{dim} & f       & e & v \\ 
\hline
 2             &         &           & 0 \; 0 \; 1 \\
 \hline
1             &           & 0 \; 1 & 0 \; k \; 0 \\
\hline
0            &     1    & 1\; 0   &  1 \; 0 \; 0 \\
\hline
\end{tabular}
%\caption{Foo}
}
\hfill
\parbox{.45\linewidth}{
\centering
\begin{tabular}{| c | c | c| c |c |}
\hline
\text{dim} & f\textsubscript{3}    &  f\textsubscript{2}      & e & v \\ 
\hline
3              &           &           &                &  0 \; \; 0 \; \; 1 \\
\hline     
 2             &         &           & 0 \; 0 \; 1   & 0 \;  \; k\textsubscript{2} \; 0 \\
 \hline
1             &           & 0 \; 1 & 0 \; k \; 0  & 0 \;  \; k\textsubscript{1} \; 0\\
\hline
0            &     1    & 1\; 0   &  1 \; 0 \; 0 &  1 \; \; 0 \; \; 0 \\
\hline
\end{tabular}
}
\vspace{0.2cm}
\caption{Morse-vectors in dimension 2 and 3.\\ 
\centering{\small{v= vertices, e= edges, f = faces}}}
\end{table}

\iffalse%%%%%%%%%%%%%BANCHOFF EXAMPLE<<<<
\begin{figure}
\subsection{Banchoff Example}
In Figure REF  shew  two pyramids  are joint over a common quadrangle. This is taken frm \cite{Banch}??.
WE consider the quadratic function $\sq_u$ with $u$ a point on the diagonal $vw$ near to $w$. We detect the following critical points:\\
3 Maxima at  vertices $u_1,u_2,w$,\\
7 saddles at edges: $q_1u_2,q_1u_1,q_1w,q_2u_2,q_2u_1,q_2w$ and at vertex $v$\\
6 minima  at faces $u_1q_1w,u_1wq_2,u_1q_2v,u_2q_2w,u_2q_w$.
CHECK\\
If we take a linear function (vertical from below), then we find\\
1 minimum at $v$, 2 maxima at $u_1,u_2$, 1 saddle at $v$.  $\chi = 1-1+2 =2$.

\includegraphics[width=12cm]{baanchoff-pic}
\caption{Banchoff-example} \label{fig:banchoff-pic}
\end{figure}
\fi%%%%%%%%%%%%%%%END-BANCHOFF<<<<<<<<

\subsection{PL-manifolds of dimension 3}
The Morse vectors for $\sq_u$ are depicted  at the right side of Table  1. %%%\ref{m-table}.XXXXXX
For vertices they are consequence of Theorem 6.3 of \cite{GKR}. For other dimensional strata we use the normal Morse Data.  They can be copied from the  1- and 2-dimensional manifold as written before in subsections \ref{ss:dim1} and \ref{ss:dim2}.
Examples of mixed  critical points; e.g. with Morse vector $[0; k_1 ; k_2; 0]$, for linear functions are given in \cite{GKR} and they can also serve in the case of the squared distance.
Other examples can be constructed via suspension. Given a PL-manifold $W \subset \mathbb{R}^N$ construct $\Sigma W$ by  suspension with two points in $\mathbb{R}^{N+1}$ then e.g the multi saddle of index 1 in the surface case can induce a multi saddle in the normal direction of an edge.

\subsection{PL-manifolds of dimension $ \ge 4$}
Critical points  in dimension 4 can be described, in a similar way;  we leave this to the reader. From dimension $5$ on there is the complication, that homologically regular is no longer the same as topo-regular. We refer to \cite{GKR} for more information and examples.\\

\subsection{ Manifolds with boundary} It is again sufficient to work with linear functions normal slices. For linear functions see  Section 5 of \cite{GKR}.  So for $\sq_u$  a good description of critical points is available  and $HMD(p)$  can be computed via the link-formula (\ref{Morse2link}). Note that if $u$ is an internal punt of $W$, then it is a (absolute) minimum.\\
Note that im several case critical points on $\partial W$ give rise to topo-regular points, eg for a top-dimension boundary face  there are only two options: (1) a minimum if u lies in an outside direction; and (2)  topo-regular  if $u$ lies in an inside direction.

%%%%%%%%%%%%%%%%%%
\section{Counting normals}\label{s:count}
A  normal from a point to a manifold $W$  corresponds in the smooth setting precisely to a critical point of $\sq_u$. We extend this correspondence to the PL-case, where we take the critical point criterion as definition.  In some cases this can look a little bit strange, especially in points where $\sq_u$ is homologically regular but not topologically regular.

The counting of  normals has a  history. As an example:
It is conjectured since long that for any convex body  $W \subset R^N$  there exists
a point in the interior of W which belongs to at least 2N normals from different points
on the boundary of W. The conjecture is known to be true for N = 2,3,4. See the work of Heil \cite{Heil} and Pardon \cite{Pa}.

There are several results in the smooth case:
Recently Y. Martinez-Maure proved in \cite{Ma}  for n = 3,4 that (under some mild conditions) almost every normal through a boundary point to a smooth convex body W passes
arbitrarily close to the set of points lying on normals through at least six distinct points
of  $\partial W$. This result was reproved by Grebennikov and Panina in \cite{GP}. 
Extending their method Panina and Siersma proved related results for concurrent normals of immersed smooth compact
manifolds. See \cite{PS}.

\noindent
We mention the following results in the PL-case:
\begin{itemize}

\item  For any convex polytope $P \subset \mathbb{R}^N$  there  is always a point in $P$ with at least $2N+2 $ normals to $\partial P$ \cite{NPS},
\item  In general: the total number of normals is less or equal to the  total number of faces \cite{NPS},
\item Each simple polytope  has an inside  point with $2N+4$  normals to the boundary \cite{NP1} (in dimension $\ge 3$),
\item Each generic polytope in $\mathbb{R}^3$ has an inside point with ten normals to the boundary \cite{NP2},
\item For a generic tetrahedron in $\mathbb{R}^3$ the  total maximum number of normals can only be 10, 12 or 14, cf \cite{NP2},
\item For every generic closed piecewise linear curve in $\mathbb{R}^3$ there is a point with at least 8 emanating normals to the curve. If the curve is knotted, there is a point with at least 10 emanating normals  \cite{PS-bat} .
\end{itemize}

\section{Towards a Discrete Morse function}\label{s:dmt}
We assume that the reader is familiar with the several notions from Discrete Morse Theory  (DM-theory). We refer to Forman  \cite{Fo}.  For the equivalence between critical points in PL-theory and DM-theory we refer to \cite{FLV}, which is an essential background for this section, see also \cite{ES}.

It seems rather evident, that the critical points of $\sq_u$ are related to DM-theory: just look to the position of critical points. Assume $W$ is a PL-manifold with the structure of a  simplicial complex. Critical points of $\sq_u$ occur on `selected' simplices. Besides the usual (Morse) maxima and minima also multi-saddles can occur.
%Near a vertex $\sq_u$ can be approximated by its linear approximation and we get locally  the context of PL-manifold with a linear function. 
Note that the index of the $\sq_u$-critical points does not fit well with the dimension of the simplex. E.g. on a top-dimensional simplex a critical point has index 0, and not the top-dimension. 
 In stead of $\sq_u$  we will work with $-\sq_u$ or with $c_u - \sq_u$,  where $c_u$ be the maximum value of $\sq_u$ on $W$.  They have the same critical points as $\sq_u$, but `opposite' indices. Now a top simplex can contain only a maximum  as critical point.  

We intend to construct for every $u \in \mathbb{R}^N$ a (companion) discrete Morse function $H_u$ on the simplicial complex,  which share a number of properties with $\sq_u$; such as a correspondence of critical points points).  For a similar study in case of Voronoi-distance, see \cite{SvM}.

For a PL-manifold  and a linear function a correspondence is known:

\begin{prop}\label{p:corres}(\cite{FLV}, Cor 11)
 Let $d \le 3$ and $f$ a PL-function.
There exist a discrete Morse function $\hat{f}$ and a $1 \leftrightarrow k_i$ correspondence between PL-critical points of $f$ of index $i$ and multiplicity $k_i$ and discrete critical $i$ simplices  of $\hat{f}$.\\
Each discrete simplex belongs to the star of the critical point. % (EXPLAIN STAR)\\
\end{prop}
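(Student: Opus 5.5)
The plan is the standard \emph{lower star} construction (as in \cite{FLV}, compare \cite{ES}). It reduces the global statement to a local problem on the lower link $lk^{-}v$ of each vertex. By formula (\ref{Morse2link}), the homology of $lk^{-}v$ computes the Morse data.

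\textbf{Setup and assembly.} Assume $W$ is triangulated, $f$ is linear on each simplex and generic PL, so vertex values are distinct. Every simplex $\sigma$ then has a unique vertex $v$ where $f$ is maximal on $\sigma$, and $W$ is the disjoint union of the lower stars $\mathrm{st}^{-}(v)$, consisting of $v$ and the simplices $v*\tau$ with $\tau \in lk^{-}v$. I will build a discrete gradient vector field $V_v$ (an acyclic matching) on each $\mathrm{st}^{-}(v)$ and take $V=\bigcup_v V_v$. For acyclicity of $V$, note that along a $V$-path the maximal vertex of the simplices never increases, and a path staying inside one lower star is acyclic by construction. By Forman's theorem \cite{Fo}, $V$ is then the gradient of a discrete Morse function $\hat f$, and its critical cells are the unions of the critical cells of the $V_v$. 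These all lie in lower stars, hence in the star of the corresponding vertex. This gives the last claim, once we know that critical PL-points occur only at vertices (Lemma 3.2 of \cite{GKR}).

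\textbf{Local coning.} Let $M_v$ be a discrete gradient on the complex $lk^{-}v$.
\begin{itemize}
\item If $lk^{-}v=\emptyset$, declare $v$ critical. This is a minimum, with index $0$ and multiplicity $1$.
\item Otherwise, let $w_0$ be a critical vertex of $M_v$. Pair $v$ with the edge $v*w_0$. Pair $v*\sigma$ with $v*\tau$ whenever $(\sigma,\tau)\in M_v$.
\end{itemize}
In the second case this is an acyclic matching, because it is a cone over an acyclic one. Its critical cells are exactly the simplices $v*\sigma$ with $\sigma\neq w_0$ a critical $i$-cell of $M_v$, and each such cell has dimension $i+1$. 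Hence if $M_v$ is \emph{perfect}, meaning its number of critical $i$-cells equals $\mathrm{rk}\,H_i(lk^{-}v)$, then the number of critical $(i+1)$-simplices in $\mathrm{st}^{-}(v)$ equals $\mathrm{rk}\,\tilde H_i(lk^{-}v)$. By (\ref{Morse2link}), this rank is the multiplicity $k_{i+1}$ of index $i+1$ at $v$. This is the claimed $1\leftrightarrow k_i$ correspondence.

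\textbf{Main obstacle: perfect gradients on lower links.} Since $d\le 3$, $lk^{-}v$ is a subcomplex of the link of $v$, which is a sphere $S^{d-1}$ of dimension at most $2$.
\begin{itemize}
\item For a graph, a perfect gradient comes from a spanning forest: root each tree, pair each non-root vertex with the edge toward the root, and leave the roots and the non-tree edges critical.
\item If $lk^{-}v$ is the whole sphere, the case of a maximum, remove one top cell (critical) and treat the rest as below.
\item If $lk^{-}v$ is a proper subcomplex of $S^2$, I would show it collapses onto a graph. Take a $2$-simplex $\tau$ of $lk^{-}v$ adjacent across an edge $e$ to a triangle of the complement. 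Then $e$ lies in only one $2$-cell of $lk^{-}v$, namely $\tau$, since edges in a $2$-sphere lie in exactly two triangles. So $e$ is a free face, and $(e,\tau)$ can be collapsed.
\end{itemize}
To justify iterating this, note that a $2$-dimensional subcomplex of $S^2$ other than $S^2$ itself always has a triangle bordering the complement. Indeed, otherwise its $2$-cells would form a closed subsurface, which must be all of $S^2$.

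Each collapse pair is added to $M_v$; collapses give acyclic pairings. We end with a graph and apply the spanning-forest matching. The resulting $M_v$ is perfect, because for planar complexes $H_2=0$ and the collapses preserve homology.

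The delicate points to check carefully are the free-face argument after each step of the iteration, and the fact that $d\le 3$ is essential here. For $d\ge 4$ the lower links are subcomplexes of $S^3$, which need not admit perfect gradients.
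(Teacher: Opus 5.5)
Your proposal is correct and follows essentially the same strategy as the cited result \cite{FLV}, which the paper does not prove itself but sketches in its section on discrete Morse theory: the level sets of the max-extension $h(\sigma)=\max_{y\in\sigma}f(y)$ are exactly your lower stars, local gradients are built on them, and the glued field is acyclic because the maximal vertex never increases along a $V$-path. Your coning of a perfect gradient on $lk^{-}v$, combined with the free-face collapse of proper subcomplexes of $S^2$ onto graphs, supplies exactly the local input that makes the case $d\le 3$ work.
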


We want a similar statement to be true for the quadratic distance function.
We sketch two approaches:

{\it 1st approach}:
Approximate $\sq_u$ by a PL-function. \\
Let $K$ be the underlying simplicial complex of the PL-manifold $W$.
We want to subdivide $K$, containing the critical points of $\sq_u$ as vertices and  such  that there exists a PL-map $g: W \to \mathbb{R}$, such that $g$ and $\sq_u$ are topologically equivalent. Probably this can be done for $d$ small. It requires anyhow a result in PL-topology.
Assuming this, apply directly the proposition \ref{p:corres}  and this gives a DM-function in the correspondence.

{\it 2d approach}: Updating the proof of the PL-statement. \\
%Assume (if necessary after subdividing) that (closed) stars of critical simplices (i.e simplices containing a critical point) are disjoint.
We  follow the reasoning in \cite{FLV} and start with defining a discrete function on (the cells of) $K$ by   $h_u(\sigma) = \max_{y \in \sigma} (c_u -\sq_u(y) )$. This will not be  a DM-function, since e.g. equalities  $h_u(\sigma) = h_u(\tau)$ if $\sigma = \tau$ can occur  already in the case vertex-edge (as soon as the edge does not contain a GM-critical point) and also for higher dimensional simplices for similar reasons.

Let $K_{\le c}$ be the the set of cells where $h_u{\sigma} \le c$. Let $c'$ be the predecessor of $c$ ( or $c-1$ in case of no predecessor).The set of cells $K_{\le c} \setminus  K_{\le c'}$ consists of cells with value exactly $c$.

We start to approximate $h_u$ by a discrete Morse function $\hat{h}_u$ as in \cite{FLV}
 Theorem 6 and Corollary  7.
If a critical point of $\sq_u$ is a vertex of $K$ then we follow the modifications of \cite{FLV} and construct a discrete gradient field near $v$ with the appropriate critical cells. If the critical point is a  simplex we have to modify this construction (e.g. via the normal slice); if the critical point lies in a top dimensional simplex, then we take this as a critical simplex in the sense of DM-theory.
The methods from \cite{FLV} will allow us to join the local contributions to a global discrete gradient field and a DM-function.\\
%%%%%%%%%%%%%%%%%%%%%%%
%END OF THE STORY
%%%%%%%%%%%%%%%%%%%%%%%%%

\section{Tools}
No AI and other software was used in this work. The Figure has been produced by the author himself using Geogabra.

%%%%%%%%%%%%%%%%%%%%%%%%%%%%%%%%%%%%%%%%%%%%%

\end{document}